\newcommand{\Z}{\ensuremath{\mathbb{Z}}}
\newcommand{\Q}{\ensuremath{\mathbb{Q}}}
\theoremstyle{plain}
\newtheorem{thm}{Theorem}[section]
\newtheorem{lem}[thm]{Lemma}
\newtheorem{cor}[thm]{Corollary}
\newtheorem{prop}[thm]{Proposition}
\newtheorem{conjecture}[thm]{Conjecture}
\theoremstyle{definition}
\newtheorem{rmq}[thm]{Remark}
\DeclareMathOperator{\Spec}{Spec}
\DeclareMathOperator{\Pic}{Pic}
\DeclareMathOperator{\Jac}{Jac}
\DeclareMathOperator{\Cl}{Cl}
\DeclareMathOperator{\Gal}{Gal}
\DeclareMathOperator{\divisor}{div}
\newcommand{\J}{\mathcal{J}}
\newcommand{\C}{\mathcal{C}}
\newcommand{\fppf}{\text{\rm fppf}}
\newcommand{\tors}{\text{\rm tors}}
\begin{document}
\bibliographystyle{amsalpha}

\title{Galois module structure and Jacobians of Fermat curves}

\author{Philippe Cassou-Nogu\`es \and Jean Gillibert \and Arnaud Jehanne}

\date{April 16, 2014}

\maketitle

\begin{abstract}
The class-invariant homomorphism allows one to measure the Galois module structure of extensions obtained by dividing points on abelian varieties. In this paper, we consider the case when the abelian variety is the Jacobian of a Fermat curve. We give examples of torsion points whose associated Galois structure is trivial, as well as points of infinite order whose associated Galois structure is non-trivial.
\end{abstract}


\section{Introduction}

In \cite{taylor88}, M. J. Taylor initiated the study of certain Galois module structures attached to $(A, \lambda)$ where $A$ denotes  an abelian variety defined over a number field and $\lambda$ is an endomorphism of $A$. This work has played a key role in new developments in the subject. On the one hand it has provided a new situation where various generalizations  of the work of Fr\"ohlich \cite{F} could be considered. On the other hand it has been a convincing path of the theory to a geometric  set up, which has proved to be fruitful since (see \cite{erez98} or \cite{CEPT} for instance). While these structures are well understood when $A$ is an elliptic curve, almost nothing is known in higher dimension. This note is a first attempt to fill this gap. More precisely, our goal is to study the Galois module structures attached to $(A, \lambda)$ when $A$ is the Jacobian of a Fermat curve and $\lambda$ is an algebraic integer of the field of complex multiplication of $A$.

\subsection{Galois modules and abelian varieties}

We now fix some notations and briefly recall the situation considered in \cite{taylor88}, that is, the construction of the so-called \emph{class-invariant homomorphism}.

We let $\overline{\Q}$ denote the algebraic closure of $\Q$ in $\mathbb{C}$. If $L$ is a number field contained in $\overline{\Q}$ we denote by $\mathcal{O}_L$ its ring of algebraic integers. If $S$ is a finite set of finite places of $L$, we denote by $\mathcal{O}_{L,S}$ the ring of $S$-integers in $L$.

We consider an abelian variety $A$ defined over a number field $L$, and we let $\lambda:A\to A$ be an endomorphism of $A$. We denote by $G$ the kernel of $\lambda:A(\overline{\Q})\to A(\overline{\Q})$. The group $G$, endowed with the action of $\Gal(\overline{\Q}/L)$, can be seen as a group scheme over $L$; this is just the kernel of $\lambda$. We assume for simplicity that the elements of $G$ are rational over $L$. In this case, $G$ is a constant group scheme and thus is the spectrum of the Hopf algebra $\mathrm{Map}(G,L)$. Its Cartier dual $G^D$ is the spectrum of the group algebra $L[G]$.

Let $S$ be a finite set of places of $L$ containing all places of bad reduction of $A$, and let $\mathcal{A}$ be the N\'eron model of $A$ over the ring $\mathcal{O}_{L,S}$, which is an abelian scheme. The endomorphism $\lambda$ extends to an endomorphism of $\mathcal{A}$, that we still denote by $\lambda$. We denote by $\mathcal{G}$ the kernel of this endomorphism and by $\mathcal{G}^D$ its Cartier dual. Then $\mathcal{G}$ and $\mathcal{G}^D$ are finite and locally free over $\Spec(\mathcal{O}_{L,S})$, and we have an exact sequence for the fppf topology on $\Spec(\mathcal{O}_{L,S})$
\begin{equation}
\label{suite1}
\begin{CD}
0 @>>> \mathcal{G} @>>> \mathcal{A} @>\lambda>> \mathcal{A} @>>> 0.\\
\end{CD}
\end{equation}
Let
$$
\delta:A(L)=\mathcal{A}(\mathcal{O}_{L,S})\longrightarrow H^1(\mathcal{O}_{L,S},\mathcal{G})
$$
be the coboundary map deduced from the exact sequence \eqref{suite1}. Here, we write $H^1(\mathcal{O}_{L,S},-)$ as a short hand for $H^1_{\fppf}(\Spec(\mathcal{O}_{L,S}),-)$, computed with respect to the fppf topology on $\Spec(\mathcal{O}_{L,S})$. We observe that $A(L)=\mathcal{A}(\mathcal{O}_{L,S})$ by the universal property of the N\'eron model.

In 1971, Waterhouse \cite{w71} has defined a morphism
$$
\pi:H^1(\mathcal{O}_{L,S},\mathcal{G})\longrightarrow \Pic(\mathcal{G}^D)
$$
which measures the Galois structure of $\mathcal{G}$-torsors. The \emph{class-invariant homomorphism} $\Psi_{L,S}$ is defined by the composition of the maps
$$
\begin{CD}
\Psi_{L,S}: A(L) @>\delta>> H^1(\mathcal{O}_{L,S},\mathcal{G}) @>\pi>> \Pic(\mathcal{G}^D) \\
\end{CD}
$$

When $S$ is empty (which is possible only if $A$ has everywhere good reduction), we denote for simplicity $\Psi_{L,\emptyset}$ by $\Psi_L$.

Let us describe this construction in algebraic terms. Let $\mathfrak{B}$ (resp. $\mathfrak{A}$) be the $\mathcal{O}_{L,S}$-Hopf algebra of the group scheme $G$ (resp. $G^D$). Then $\mathfrak{B}$ (resp. $\mathfrak{A}$) is a Hopf order in $L[G]$ (resp. $\mathrm{Map}(G,L)$). Given $P\in A(L)$, we consider the $G$-set
$$
G_P:=\{Q\in A(\overline{\Q}) ~|~ \lambda Q=P\}.
$$
We denote by $L(\lambda^{-1}P)$ the smallest extension of $L$ over which the elements of $G_P$ are defined.
Then $\mathrm{Map}_{\Gal(\overline{\Q}/L)}(G_P,\overline{\Q})$ is a $G$-Galois algebra over $L$, product of copies of $L(\lambda^{-1}P)$, and the algebra of the torsor $\delta(P)$ is an order in this algebra, that we denote by $\mathfrak{C}_P$. This order is a Hopf-Galois object over the algebra $\mathfrak{B}$, hence a locally-free rank one $\mathfrak{A}$-module. The class-invariant homomorphism $\Psi_{L,S}$ can be described as the map
\begin{align*}
A(L) &\longrightarrow \Cl(\mathfrak{A}) \\
P &\longmapsto (\mathfrak{C}_P)-(\mathfrak{B})
\end{align*}
where $\Pic(G^D)$ is identified with $\Cl(\mathfrak{A})$, the locally free class group of $\mathfrak{A}$. Hence, $\Psi_{L,S}$ measures the Galois module structure of $\mathfrak{C}_P$.

In \cite{taylor88}, Taylor made the following conjecture:

\begin{conjecture}
\label{martin_conj}
Let $A/L$ be an elliptic curve with everywhere good reduction, and admitting complex multiplication by the ring of integers of a quadratic imaginary number field $K$. Then, for any $\lambda \in \mathcal{O}_K$ that is coprime to the number of roots of unity in $K$,
$$
A(L)_{\tors}\subseteq \ker(\Psi_L).
$$
\end{conjecture}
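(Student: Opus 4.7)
The plan is to show that for every $P\in A(L)_{\tors}$, the class $[\mathfrak{C}_P]-[\mathfrak{B}]\in \Cl(\mathfrak{A})$ vanishes, i.e.\ that $\mathfrak{C}_P$ is locally free of rank one over $\mathfrak{A}$ at every finite place $v$ of $L$. Because $\Psi_L$ is a group homomorphism and $A(L)_{\tors}$ is finite, one may decompose into $\ell$-primary components; by factoring $\lambda$ into prime elements of $\mathcal{O}_K$ and using functoriality of $\Psi_L$ under composition of isogenies, one may further reduce to the case where $\lambda$ is itself a prime element of $\mathcal{O}_K$ above a rational prime $\ell$, and $P$ is an $\ell$-power torsion point.

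At a place $v$ of $L$ not lying above $\ell$, the group scheme $\mathcal{G}_v$ is \'etale, $\mathfrak{A}_v$ is the maximal order in the corresponding commutative semisimple $L_v$-algebra, and the $\mathcal{G}$-torsor $\delta(P)$ is at worst tamely ramified at $v$. The classical normal integral basis theorem in the tame case then gives freeness of $\mathfrak{C}_{P,v}$ over $\mathfrak{A}_v$, so the only interesting local problem is at places $v$ above $\ell$.

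At such a $v$, the CM hypothesis implies that the formal group $\widehat{\mathcal{A}}_v$ is a (height one) Lubin--Tate formal group for the local field $K_{\mathfrak{p}}$ with $\mathfrak{p}=\lambda\mathcal{O}_K$, and therefore the values $\lambda^{-1}P$ generate a totally ramified Lubin--Tate extension of $L_v$. My approach is to produce an explicit generator $\xi\in L_v(\lambda^{-1}P)$ of $\mathfrak{C}_{P,v}$ as a free $\mathfrak{A}_v$-module by applying the formal logarithm of $\widehat{\mathcal{A}}_v$ (or, more robustly, a Coleman-series construction) to a local lift of $P$, and then to show that its Fr\"ohlich resolvent is a local unit. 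This is where the coprimality hypothesis $(\lambda,|\mu(K)|)=1$ enters: since $|\mu(K)|\in\{2,4,6\}$, the hypothesis precisely excludes the primes $2$ and $3$, which are the only primes at which torsion in $\mu(K_{\mathfrak{p}})$ could appear in the denominators of the Lubin--Tate normalisation and obstruct the construction of a unit resolvent.

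The main obstacle is this last step: the explicit construction of $\xi$ and the verification that its resolvent has trivial valuation in the Lubin--Tate tower. Once local freeness is obtained at every $v$, gluing through the standard local-to-global description of $\Cl(\mathfrak{A})$ yields $\Psi_L(P)=0$, completing the proof.
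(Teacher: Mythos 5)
You should first note that the statement you were asked to prove is stated in the paper as a \emph{conjecture} (Taylor's Conjecture~\ref{martin_conj}): the paper contains no proof of it, and records that the only known proof, due to Srivastav and Taylor \cite{SrT}, requires the absolute norm of $\lambda$ to be coprime to $6$ and rests on explicit constructions with division values of elliptic (theta) functions --- indeed the introduction stresses that ``the construction of explicit bases in terms of values of these functions'' was ``at the center of the proof'' of \cite{SrT}, and that the absence of such a formula-fest is precisely what blocks the higher-dimensional case. Your proposal reproduces the outer shell of that strategy (reduction to prime $\lambda$, tame/unramified places handled classically, Lubin--Tate theory at places above $\ell$), but it defers exactly the heart of the matter: you yourself flag the construction of the generator $\xi$ and the verification that its resolvent is a unit as ``the main obstacle.'' Since every known proof of this conjecture consists entirely of that step, what you have written is a plan for a proof, not a proof; the conjecture in full generality (e.g.\ with $\lambda$ of even norm when $|\mu(K)|$ is odd --- which never happens, but more relevantly with norm divisible by $3$ for general $K$) remains open.

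There is also a conceptual misstep in your framing. The algebra $\mathfrak{C}_P$ is \emph{automatically} a locally free rank-one $\mathfrak{A}$-module --- the paper says as much when defining $\Psi_{L,S}$ --- so your stated goal of proving local freeness at every finite place is vacuous: $\Cl(\mathfrak{A})$ is by definition the group of classes of locally free modules modulo free ones, and the vanishing of $\Psi_L(P)$ is a global statement which no amount of ``gluing'' of local freeness can deliver. The correct formalism is Fr\"ohlich's Hom-description: one must take a single \emph{global} generator of the $G$-algebra $\mathrm{Map}_{\Gal(\overline{\Q}/L)}(G_P,\overline{\Q})$, form its resolvent idele, and show that its class is trivial, i.e.\ control the resolvents at all places \emph{simultaneously} modulo global elements and unit ideles; your local Lubin--Tate generators at the places above $\ell$ do not assemble into such a global datum, and producing one is precisely what the elliptic-function machinery of \cite{SrT} accomplishes. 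Two secondary inaccuracies: at $v\nmid\ell$, everywhere good reduction makes $L(\lambda^{-1}P)/L$ unramified at $v$ (not merely tame), so that step is fine but easier than you suggest; and your reading of the coprimality hypothesis is off, since $|\mu(K)|=2$ unless $K=\Q(i)$ or $K=\Q(\zeta_3)$, so in general the hypothesis excludes only the prime $2$, whereas the theorem of \cite{SrT} needs the stronger assumption that the norm of $\lambda$ is coprime to $6$.
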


This conjecture has been proved by Srivastav and Taylor \cite{SrT}, at least when $\lambda$ has absolute norm coprime to $6$ (see also \cite{cnj}).  It is natural to ask what happens when one replaces the elliptic curve $A$ by an abelian variety of higher dimension.

The difficulties encountered when dealing with abelian varieties of arbitrary dimension can be summarized  as follows: (1) the theory of abelian functions in at least two variables is not quite the ``formula-fest'' that the theory of elliptic function is;  as a consequence, the construction of explicit bases in terms of values of these functions, which was at the center of the proof of the theorem of \cite{SrT}, is not possible anymore. (2) The geometric result proved by Pappas \cite[Theorem~A]{pappas98}, from which he was able to derive a new proof of the theorem of \cite{SrT}, turns out to be wrong in higher dimension \cite[Theorem~C]{pappas98}.

These observations, together with the work of the second author (see \cite{gil4}, Theorem~4.7 and Example~4.9), lead us to think that an analogue of the theorem of \cite{SrT}, in full generality, is very unlikely. Nevertheless, one may hope for the existence of abelian varieties for which Conjecture~\ref{martin_conj} holds.
 
Moreover, we observe that the conjecture says nothing about the behaviour of $\Psi_L$ on points of infinite order. It is expected that such points should not, in general, belong to the kernel of $\Psi_L$, but very few examples have been given so far.
 
In the present note, we consider these two questions. More precisely, we provide a family of examples of $(A,\lambda)$, where $A$ has complex multiplication by a cyclotomic field, such that:
\begin{itemize}
\item[(a)] Torsion points lie in the kernel of $\Phi$ (a variant of $\Psi$, defined below);
\item[(b)] There exists infinitely many independent points of infinite order, defined over quadratic extensions of the base field, which do not belong to the kernel of $\Psi$.
\end{itemize}

The point (a) is obtained as a consequence of the deep results of Greenberg \cite{greenberg81} on torsion points on Jacobians of quotients of Fermat curves. The point (b) follows from results of Levin and the second author \cite{GillibertLevin} on line bundles on hyperelliptic curves.

\subsection{Description of the results}
\label{subsection_results}

Once and for all, we fix a prime number $p\geq 5$, and a primitive $p$-th root of unity $\zeta$. We denote by $K=\Q(\zeta)$ the $p$-th cyclotomic field, by $K^+$ its maximal real subfield, by $h$ and $h^+$ their respective class numbers. We consider the smooth projective curve $C$ birational to the plane curve defined by the equation $y^p=x(1-x)$, and we  denote by $A$ the Jacobian of $C$. This abelian variety is defined over $\Q$. Over $K$ it has complex multiplication by $\Z[\zeta]$. We will identify in this note the algebraic integer $\lambda= 1-\zeta$ with the endomorphism of $A$ it induces. The group $G$ of points of $A(\overline {\Q})$ killed by $\lambda$ has order $p$. In fact, $G$ and its Cartier dual $G^D$ are constant group schemes over $K$.

We let $F$ be either the field $K$, if $2^{p-1}\equiv 1 \mod p^2$, or the field $K(2^{1/p})$ otherwise. We prove in Subsection~\ref{jacobian_red} that $A$ acquires everywhere good reduction over $F$.

Let us first explain how one defines the homomorphism $\Phi_F$. The group schemes $G$ and $G^D$ are constant of order $p$. Let us denote by $(\Z/p\Z)_{\mathcal{O}_F}$ the constant group scheme of order $p$ over $\mathcal{O}_F$, then by choosing a generator of $G^D$ we get a morphism of group schemes $g:(\Z/p\Z)_{\mathcal{O}_F}\to \mathcal{G}^D$ which becomes an isomorphism over $\mathcal{O}_F[p^{-1}]$. By Cartier duality, $g$ induces a morphism of group schemes $g^D:\mathcal{G}\to\mu_p$.

The following diagram describes the situation
\begin{equation}
\label{diagram1}
\begin{CD}
@. @. H^1(\mathcal{O}_F,\mathcal{G}) @>\pi>> \Pic(\mathcal{G}^D) \\
@. @. @V(g^D)_*VV @VVg^*V \\
1 @>>> \mathcal{O}_F^{\times}/(\mathcal{O}_F^{\times})^p @>>> H^1(\mathcal{O}_F,\mu_p) @>\pi>> \Pic((\Z/p\Z)_{\mathcal{O}_F}) \\
\end{CD}
\end{equation}
By functoriality of Waterhouse's construction (see \cite[Subsection~2.2]{gil3}), the square is commutative. Moreover, according to \cite[Prop.~3.1]{gil3}, the lower line is exact.

The morphism
$$
\Phi_F:A(F)\longrightarrow \Pic((\Z/p\Z)_{\mathcal{O}_F})\simeq \Cl(\mathcal{O}_F)^p
$$
is defined by: $\Phi_F=g^*\circ\Psi_F$. We note that, as a scheme, $(\Z/p\Z)_{\mathcal{O}_F}$ is the disjoint union of $p$ copies of $\Spec(\mathcal{O}_F)$, hence its Picard group is isomorphic to the product of $p$ copies of $\Cl(\mathcal{O}_F)$.

Let us describe this construction from an algebraic point of view: the maximal order $\mathfrak{M}$ in $F[G]$ is completely split and thus is a product of $p$ copies of $\mathcal{O}_F$. The map $\Phi_F$ can be understood as the composition of $\Psi_F$ with the homomorphism $\Cl(\mathfrak{A})\to\Cl(\mathfrak{M})$ induced by scalar extension.

According to the exactness of the lower line of \eqref{diagram1}, $\Phi_F(P)=0$ if and only if $F(\lambda^{-1}P)$ is of the form $F(\alpha^{1/p})$ for some $\alpha\in \mathcal{O}_F^{\times}$. This result we refer to as Kummer's criterion, was first proved by Bouklou \cite{bouklou}. Using this characterisation of $\ker(\Phi_F)$, we prove in Section~\ref{section_torsion} the following:

\begin{thm}
\label{theorem12}
For any integer $p\geq 5$ which doesn't divide $h^+$,
$$
A(F)_{\tors}\subseteq \ker(\Phi_F).
$$
\end{thm}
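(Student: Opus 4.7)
By the Kummer-style criterion stated just before the theorem, it suffices to exhibit, for every torsion point $P\in A(F)_\tors$, a unit $\alpha\in\mathcal{O}_F^\times$ such that $F(\lambda^{-1}P)=F(\alpha^{1/p})$. A first reduction: the endomorphism $\lambda=1-\zeta$ has norm $p$ in $\mathcal{O}_K$, so multiplication by $\lambda$ is an automorphism of the prime-to-$p$ torsion subgroup of $A$. Thus, if $P$ has order coprime to $p$, one already has $Q\in A(F)$ with $\lambda Q=P$, and the criterion is satisfied with $\alpha=1$. It remains to treat the $p$-primary case $P\in A(F)[p^\infty]$.

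For this, the key ingredient is Greenberg's theorem on torsion points of Jacobians of quotients of Fermat curves: under the assumption $p\nmid h^+$, the $p$-primary torsion of $A$ rational over cyclotomic-type fields is entirely \emph{cuspidal}, i.e.\ every such class is represented by a divisor supported on the cusps of $C$. Consequently $\lambda^{-1}P$ also consists of cuspidal classes. A cuspidal lift $Q$ with $\lambda Q=P$ can be represented by a divisor $D$ on $C$ whose $p$-th multiple $pD=\divisor(f)$ is principal, and $f$ is necessarily (up to $p$-th powers in the function field) a monomial in the coordinates $x$, $1-x$, $y$, whose divisors generate the cuspidal subgroup. Reading this identity along the Abel-Jacobi map identifies $F(\lambda^{-1}P)$ with $F(\alpha^{1/p})$, where $\alpha\in\mathcal{O}_F^\times$ is the cyclotomic unit obtained by specialising $f$ at the cusps involved. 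Kummer's criterion then yields $\Phi_F(P)=0$.

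\textbf{Main obstacle.} The crux of the argument is the last step: one must pin down which cuspidal classes descend to $F$-rational points of $A$, and verify that the Kummer generator $\alpha$ so produced is a \emph{global} unit of $\mathcal{O}_F$, i.e.\ has trivial valuation at every finite place of $F$, including the places above $p$. This is where the choice of $F$ in Subsection~\ref{jacobian_red} (giving everywhere good reduction) is essential, since the Néron model of $A$ and the integral structure of $\mathcal{G}^D$ are built into the formulation of $\Phi_F$. The hypothesis $p\nmid h^+$ enters twice: it legitimates the appeal to Greenberg, and it is precisely what is needed to ensure that the Kummer generators one extracts are genuine cyclotomic units rather than units up to $p$-th power corrections coming from the plus-part of the class group. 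Reconciling Greenberg's abstract description of the cuspidal torsion with the explicit coordinate description coming from the equation $y^p=x(1-x)$ is the technical heart of the proof.
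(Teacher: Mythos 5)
There is a genuine gap, and it sits exactly at the step your plan leans on. Your reduction of the prime-to-$p$ torsion is fine (it matches the paper's Remark~\ref{remark13}(2) in spirit), but the key claim for the $p$-primary part --- that under $p\nmid h^+$ the rational $p$-primary torsion is ``entirely cuspidal'' and that \emph{consequently $\lambda^{-1}P$ also consists of cuspidal classes} --- is false for this curve. The cusps of $C: y^p=x(1-x)$ are the three points $P_0=(0,0)$, $P_1=(1,0)$, $P_\infty$, and since $\divisor(x)=pP_0-pP_\infty$ and $\divisor(y)=P_0+P_1-2P_\infty$, the cuspidal subgroup is cyclic of order $p$, equal to $A[\lambda]$; it is already strictly smaller than $A(K)_{\tors}=A[\lambda^3]$ (Tzermias), so even the $K$-rational $p$-torsion is not cuspidal. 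Worse, for a primitive $\lambda^3$-torsion point $P$ the points $Q\in\lambda^{-1}P$ are primitive $\lambda^4$-torsion and are \emph{not} rational over $F$ (the paper checks $A[p^\infty](F)=A[\lambda^3]$ via a ramification argument when $F=K(2^{1/p})$), whereas any class of a divisor supported on the $\Q$-rational cusps is rational over $\Q$. So no divisor $D$ supported on cusps represents $Q$, no monomial $f$ in $x$, $1-x$, $y$ with $pD=\divisor(f)$ exists for such a $Q$, and the unit $\alpha$ you propose to extract by ``specialising $f$ at the cusps'' never comes into existence. Your ``Main obstacle'' paragraph then concedes, rather than closes, the remaining distance: as written, the argument is a plan whose central mechanism does not apply.

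For comparison, what Greenberg's Theorem~4 actually supplies --- and what the paper uses --- is purely field-theoretic: $K(A[p])\subseteq M:=K(\{\varepsilon^{1/p}\ |\ \varepsilon\in U\})$, where $U$ is the group of cyclotomic units of $K^+$, with no Vandiver-type hypothesis needed at that stage (so the hypothesis $p\nmid h^+$ does not ``legitimate the appeal to Greenberg''). The real work, carried out in the paper's Proposition~\ref{prop41}, is to show that the specific degree-$p$ subextension $K(Q)/K$ of $M$ is generated by a $p$-th root of a \emph{single} cyclotomic unit. This is done by a counting argument: the number of degree-$p$ subextensions of $M/K$ is $\frac{p^t-1}{p-1}$ with $t=\dim_{\mathbb{F}_p}U/U^p$, while Lemma~\ref{lem42} --- which is where $p\nmid h^+$ enters, via the classical fact that the index of $U$ in the full unit group of $K^+$ is $h^+$ --- shows the fields $K(\varepsilon^{1/p})$, $\varepsilon\in U$, are in bijection with the lines of $U/U^p$, hence also $\frac{p^t-1}{p-1}$ in number; therefore \emph{every} degree-$p$ subextension, in particular $K(Q)$, is of the form $K(\mu^{1/p})$ for a cyclotomic unit $\mu$, and Kummer's criterion concludes. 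To repair your proof you would need to replace the cuspidal-divisor mechanism by this (or an equivalent) argument, together with the determination $A(F)[p^\infty]=A[\lambda^3]$ that reduces the theorem to primitive $\lambda^3$-torsion points.
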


\begin{rmq}
\label{remark13}
\begin{enumerate}
\item Vandiver's conjecture states that $p$ never divides $h^+$. This conjecture has been checked for $p$ less than $163\times 10^6$ by Buhler and Harvey \cite{buha11}.
\item The group $H^1(\mathcal{O}_F,\mu_p)$ being killed by $p$, it follows from the construction of $\Phi_F$ that $p.\Phi_F(P)=0$ for all $P\in A(F)$. In particular, any torsion point with annihilator coprime to $\lambda$ belongs to the kernel of $\Phi_F$.  Moreover, the result given in Theorem~\ref{theorem12} is trivial when $p$ doesn't divide $h_F$ (the class number of $F$), therefore it is important to check that this is not the case in general. Since either $F=K$ or $F/K$ is totally ramified over $\mathfrak{p}$ (the unique prime of $K$ above $p$), then $h$ divides $h_F$ (see \cite[Theorem~10.1]{washington}). So we are interested in primes $p$ that divide $h$. Such primes are called irregular, and it is well known that there exists infinitely many of them.
\item In fact, the group schemes $\mathcal{G}$ and $\mathcal{G}^D$ are isomorphic over $\mathcal{O}_F$ (see Subsection~\ref{theorem14_proof}). It would be interesting to compute the cokernel of the map $H^1(\mathcal{O}_F,\mathcal{G})\to H^1(\mathcal{O}_F,\mu_p)$, which would allow to compare the kernels of $\Phi_F$ and $\Psi_F$.
\end{enumerate}
\end{rmq}

We now state our result about points that do not belong to the kernel of $\Psi$. We note that the curve $C$ with equation $y^p=x(1-x)$ has good reduction outside $p$, hence its jacobian $A$ also has good reduction outside $p$. If $L$ is a finite extension of $K$, we let $S$ be the set of places of $L$ above $p$. We prove in Section~\ref{section_infinite} the following:

\begin{thm}
\label{theorem14}
There exists an infinity of quadratic extensions $L/K$ with a point $P\in A(L)$ of infinite order such that $\Psi_{L,S}(P)\neq 0$.
\end{thm}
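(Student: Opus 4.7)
The plan is to exploit the hyperelliptic structure of $C$. The substitution $z=2x-1$ converts $y^p=x(1-x)$ into $z^2=1-4y^p$, exhibiting $C$ as a hyperelliptic curve of genus $(p-1)/2$ with involution $\iota:(y,z)\mapsto(y,-z)$, which on the original equation is $(x,y)\mapsto(1-x,y)$. Since $p$ is odd, this model has a unique point $\infty$ at infinity, which is a $K$-rational Weierstrass point; using $\infty$ as base point gives an Abel--Jacobi embedding $C\hookrightarrow A$ on which $\iota$ acts as $-1$.

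For each $y_0\in\mathcal{O}_K$ such that $\alpha:=1-4y_0^p$ is a non-square in $K^\times$, set $L=L_{y_0}:=K(\sqrt{\alpha})$, $P_{y_0}:=(y_0,\sqrt{\alpha})\in C(L)$, and consider
$$
Q_{y_0}:=[P_{y_0}-\infty]\in A(L).
$$
The non-trivial element of $\Gal(L/K)$ sends $P_{y_0}$ to $\iota(P_{y_0})$, so $\Gal(L/K)$ acts on $Q_{y_0}$ by $-1$; in particular $Q_{y_0}$ lies in the kernel of the norm $A(L)\to A(K)$.

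One first checks that, as $y_0$ ranges over $\Z$, the square-free parts of $1-4y_0^p$ take infinitely many values in $K^\times/(K^\times)^2$---for instance by controlling newly appearing prime divisors in specialisations---so that infinitely many distinct quadratic extensions $L/K$ arise. The core of the proof is then to apply the main theorem of \cite{GillibertLevin} on line bundles on hyperelliptic curves, which supplies an explicit description of $\Psi_{L,S}(Q_{y_0})$ in $\Cl(\mathfrak{A})$ in terms of the factorization of the parameters defining $P_{y_0}$, together with a criterion for its non-vanishing. One verifies this criterion for infinitely many $y_0$ and observes, via an elementary specialization argument (only countably many $y_0$ can produce a torsion $Q_{y_0}$ in $A(\overline{\Q})$), that all but countably many $y_0$ yield a point of infinite order.

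The main obstacle is the non-vanishing check: in Kummer-theoretic terms, one must rule out, for infinitely many $y_0$, the existence of an $\mathcal{O}_{L,S}$-unit $u$ with $L(\lambda^{-1}Q_{y_0})\subseteq L(u^{1/p})$. This reduces to a careful local analysis at the primes of $L$ dividing $\alpha$ and above $p$, using the factorization of $1-4y_0^p$ to produce primes whose contribution to the class group obstruction cannot be absorbed by global $S$-units. A secondary difficulty is to ensure that across our family of $y_0$ the resulting non-trivial classes actually arise in infinitely many different extensions rather than repeating in finitely many $L$'s; this is handled by the first substep above combined with the specialisation argument.
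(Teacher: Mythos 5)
Your construction of the quadratic points is the same as the one underlying the paper (quadratic twist points $(y_0,\sqrt{1-4y_0^p})$ on the hyperelliptic model, feeding into \cite{GillibertLevin}), but the core non-vanishing step is a genuine gap. The main theorem of \cite{GillibertLevin} does \emph{not} give ``an explicit description of $\Psi_{L,S}(Q_{y_0})$ \ldots together with a criterion for its non-vanishing'': what it gives (quoted in the paper as Proposition~\ref{super}) is that for infinitely many imaginary quadratic fields $L/\Q$ there is a point $P\in C(L)$ such that the pullback map $P^*:\Pic^0(\C)[m]\to\Cl(\mathcal{O}_{L,S})$ on \emph{torsion line bundles} is injective. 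Converting this into non-vanishing of the class-invariant homomorphism is precisely the content of the paper's Section~\ref{section_infinite}, and none of it appears in your proposal: one must (i) identify $P^*$ with $Q\mapsto(P\times Q)^*\mathcal{P}_{\J}$ via the Poincar\'e bundle; (ii) compute Waterhouse's map as $\pi(t)=(0,c(t),\dots,c(t)^{p-1})$ and show $c(\delta(P))=(P\times Q)^*\mathcal{P}_{\J}$ (this uses \cite[Lemme~3.2]{gil1}) — so the invariant of the divided point is detected by pairing against the fixed rational $p$-torsion point $Q=[P_0-P_{\infty}]$, not by ``the factorization of the parameters defining $P_{y_0}$''; and (iii) identify $\psi_{L,S}$, which is attached to the \emph{dual} isogeny $\varphi^t$ of the quotient by $\langle Q\rangle$, with $\Psi_{L,S}$, attached to $\lambda=1-\zeta$; this requires showing that $\varphi$ base-changed to $K$ is $\lambda$ (both kernels equal $A[\lambda]=\langle Q\rangle$) and that $\lambda$ is self-dual up to an automorphism of $A$ (via $\lambda^{p-1}=[p]$ up to a unit of $\Z[\zeta]$). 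Your proposed substitute — ruling out an $S$-unit $u$ with $L(\lambda^{-1}Q_{y_0})\subseteq L(u^{1/p})$ by ``careful local analysis at the primes dividing $\alpha$ and above $p$'' — is exactly the hard part, and it is asserted rather than carried out; no such ready-made criterion exists in \cite{GillibertLevin}.

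Two further steps are defective. First, your infinite-order argument (``only countably many $y_0$ can produce a torsion $Q_{y_0}$'') is vacuous: the family $\{y_0\}$ is itself countable, so ``all but countably many'' could be empty. The paper instead invokes the Manin--Mumford conjecture (Raynaud): the embedded curve $i(C)\subset A$ contains only \emph{finitely} many torsion points, so all but finitely many of the points furnished by Proposition~\ref{super} have infinite order. Second, \cite{GillibertLevin} is stated for hyperelliptic curves over $\Q$ and produces imaginary quadratic fields over $\Q$; it does not apply verbatim to your extensions $K(\sqrt{\alpha})/K$. The paper handles this by first getting non-vanishing over imaginary quadratic $L_0/\Q$ and then base-changing to $L=L_0K$, which is quadratic over $K$, using that $[K:\Q]=p-1$ is coprime to $p$ so the order-$p$ class in $\Cl(\mathcal{O}_{L_0,S})$ survives in $\Cl(\mathcal{O}_{L,S})$. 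Your outline omits this descent/ascent step as well.
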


In fact, the result we prove (Corollary~\ref{class_invariant_cor}) applies in a quite general setting to Jacobians of hyperelliptic curves defined over $\Q$.

As we already pointed out, $\mathcal{G}^D$ becomes isomorphic to the constant group scheme $(\Z/p\Z)$ after inverting $p$, hence the corresponding Picard group is isomorphic to the product of $p$ copies of $\Cl(\mathcal{O}_{L,S})$. Roughly speaking, the equality $\Psi_{L,S}=\Phi_{L,S}$ holds.

\subsubsection*{Acknowledgements}
The authors thank Qing Liu for his help, and Jes{\'u}s G{\'o}mez Ayala for pointing out Greenberg's result.


\section{Torsion points and trivial Galois modules}
\label{section_torsion}

\subsection{Ramification in the splitting field of $X^p-l$ }

Let $l$ and $p$ be distinct prime numbers and let $N$ denote the splitting field of the polynomial $X^p-l$ in $\overline{\Q}$. The extension $N/\Q$ is unramified outside $p$ and $l$. Moreover the ramification over $l$ is easy to describe. In  this section we  study in more details  the ramification of the prime $p$ in $N$. The results of Proposition~\ref{prop23} will be used in order to construct a  number field on which   the abelian varieties we  deal with have everywhere good reduction. 

We begin by recalling  Hensel's lemma. 

\begin{lem}
\label{lem21}
Let $f \in \Z_p[X]$ and suppose that there exists $x \in \Z_p$ such that $v_p(f'(x))=k$ and $v_p(f(x))\geq 2k+1$. Then there exists $y \in \Z_p$ with 
$v_p(y-x)\geq k+1$ such that $f(y)=0$.
\end{lem}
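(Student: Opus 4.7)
The plan is to prove this by $p$-adic Newton iteration. I set $x_0 := x$ and define inductively $x_{n+1} := x_n - f(x_n)/f'(x_n)$, provided the division makes sense in $\Z_p$ at each step. The goal is to prove by induction on $n\geq 0$ the three statements:
\begin{itemize}
\item[(a)] $x_n \in \Z_p$ and $v_p(x_n - x_{n-1}) \geq k+1+n$ for $n\geq 1$;
\item[(b)] $v_p(f'(x_n)) = k$;
\item[(c)] $v_p(f(x_n)) \geq 2k+1+n$.
\end{itemize}
The base case $n=0$ is exactly the hypothesis of the lemma. For the inductive step, I would use the $p$-adic Taylor expansion with integral coefficients: there exist polynomials $g,h \in \Z_p[X,Y]$ such that
$$f(X+Y) = f(X) + Yf'(X) + Y^2 g(X,Y), \qquad f'(X+Y) = f'(X) + Y h(X,Y).$$
(These are integral because the divided derivatives $f^{(i)}(X)/i!$ appear as coefficients of $Y^i$ in $f(X+Y)$, hence lie in $\Z_p[X]$.) Applying this with $X=x_n$ and $Y=-f(x_n)/f'(x_n)$, whose valuation is $\geq k+1+n$ by (b) and (c), the first two terms cancel in the expansion of $f$, giving
$$v_p(f(x_{n+1})) \geq 2(k+1+n) \geq 2k+1+(n+1),$$
which is (c) at step $n+1$. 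The expansion of $f'$ yields $v_p(f'(x_{n+1}) - f'(x_n)) \geq k+1+n > k$, whence $v_p(f'(x_{n+1})) = k$ by the ultrametric inequality, proving (b). Finally $v_p(x_{n+1}-x_n) = v_p(f(x_n)) - v_p(f'(x_n)) \geq k+1+n$, giving (a).

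By (a), the sequence $(x_n)$ is Cauchy in $\Z_p$ and converges to some $y \in \Z_p$. By continuity of $f$ and (c), $f(y)=0$. Telescoping (a) gives $v_p(y-x) \geq k+1$, as required.

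The only potential obstacle is ensuring the Taylor expansion remainders are genuinely in $\Z_p$ despite the factorials in $f^{(i)}(x)/i!$; this is circumvented by viewing these divided derivatives as the (integral) coefficients of $f(X+Y) \in \Z_p[X][Y]$, rather than differentiating and then dividing. Everything else is routine $p$-adic bookkeeping, and the proof is entirely standard.
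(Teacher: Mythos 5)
Your proof is correct, and it is the standard Newton--Hensel iteration; the paper itself gives no argument here, simply citing Fr\"ohlich--Taylor \cite[3.25]{frotay}, where essentially this same iteration is carried out, so there is no methodological divergence to report. One cosmetic slip: your induction hypothesis (a) claims $v_p(x_n-x_{n-1})\geq k+1+n$ for $n\geq 1$, but what your own computation establishes is $v_p(x_{n+1}-x_n)\geq k+1+n$, i.e.\ $v_p(x_n-x_{n-1})\geq k+n$; as literally stated, (a) can already fail at $n=1$ when $v_p(f(x))=2k+1$ exactly, since then $v_p(x_1-x_0)=k+1<k+2$. This is harmless --- the telescoping and Cauchy arguments only need $v_p(x_{n+1}-x_n)\geq k+1$ together with increments tending to infinity, both of which you do prove --- but the exponent in (a) should be corrected for the induction to read consistently. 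The device of obtaining the quadratic remainder term via the integrality of the coefficients of $f(X+Y)$ in $\Z_p[X][Y]$, rather than through divided derivatives $f^{(i)}/i!$, is exactly the right way to avoid the factorial issue.
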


\begin{proof}
See for instance \cite[3.25]{frotay}.
\end{proof}

\begin{cor}
The polynomial $X^p-l$ has a root in $\Z_p$ iff $l^{p-1}\equiv 1\mod p^2$. 
\end{cor}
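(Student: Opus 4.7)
The plan is to handle the two implications separately, using Lemma~\ref{lem21} (Hensel) for the non-trivial direction.

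Necessity is immediate: if $y \in \Z_p$ satisfies $y^p = l$, then $y$ is a unit in $\Z_p$ (as $l \neq p$), so by Fermat $y^{p-1} = 1 + pa$ for some $a \in \Z_p$. Raising to the $p$th power and using that every intermediate binomial coefficient $\binom{p}{k}$ with $1 \leq k \leq p-1$ is divisible by $p$, one obtains
$$
l^{p-1} = (1+pa)^p \equiv 1 + p^2 a \pmod{p^3},
$$
whence $l^{p-1} \equiv 1 \pmod{p^2}$.

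For the converse, assume $l^{p-1} \equiv 1 \pmod{p^2}$ and write $l^{p-1} = 1 + p^2 b$ with $b \in \Z_p$. I will apply Lemma~\ref{lem21} to $f(X) = X^p - l$. Since $f'(X) = pX^{p-1}$ satisfies $v_p(f'(x)) = 1$ for every unit $x$, the lemma demands an $x \in \Z_p$ with $v_p(f(x)) \geq 3$. The naive choice $x = l$ only gives $v_p(l^p - l) = v_p(l(l^{p-1}-1)) \geq 2$, exactly one power of $p$ short of the threshold. The idea is to correct $l$ by a factor in $1 + p\Z_p$: set $x = l(1 - pb)$. A binomial expansion, in which every term $\binom{p}{k}(pb)^k$ for $k \geq 2$ is divisible by $p^3$ (using that $p$ is odd so $\binom{p}{k}$ contributes an extra $p$ for $1 \leq k \leq p-1$), yields $(1-pb)^p \equiv 1 - p^2 b \pmod{p^3}$. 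Combined with $l^p = l(1 + p^2 b)$, this gives
$$
x^p = l^p (1-pb)^p \equiv (l + p^2 bl)(1 - p^2 b) \equiv l \pmod{p^3},
$$
so $v_p(f(x)) \geq 3$ and Hensel's lemma supplies the desired root of $X^p - l$ in $\Z_p$.

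The only obstacle worth flagging is the choice of approximation: the obvious starting point $x = l$ just misses the $v_p(f(x)) \geq 2k+1$ condition of Lemma~\ref{lem21} by one power of $p$, and one must absorb that extra $p$ by tweaking with the factor $(1 - pb)$ drawn from the hypothesis $l^{p-1} - 1 \in p^2 \Z_p$.
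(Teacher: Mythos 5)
Your proof is correct and takes essentially the same route as the paper: your multiplicative correction $x=l(1-pb)$ is literally the paper's additive choice $x=l+ap$ (since $l-l^p=-lp^2b$ gives $a=-lb$), and both arguments then invoke Lemma~\ref{lem21} with $k=1$ after checking $v_p(f'(x))=1$ and $v_p(f(x))\geq 3$. The necessity direction differs only cosmetically, as you expand $(y^{p-1})^p=(1+pa)^p$ where the paper expands $(l+ap)^p$.
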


\begin{proof}
Suppose that $x$ is a root of $X^p-l$ in $\Z_p$. Since $x\equiv x^p\equiv l\mod p$, we may 
write $x=l+ap$ with $a \in \Z_p$. By using that $(l+ap)^p\equiv l\mod p^2$, we deduce that $l^p\equiv l\mod  p^2$ and so that $l^{p-1}\equiv 1\mod  p^2$.

Assume now that $l^{p-1} \equiv 1\mod p^2$. We choose $a\in \Z_p$ such that $l-l^p=ap^2$ and we set $x=l+ap$. Since $x^p\equiv l\mod p$, we observe that $v_p(f'(x))=1$. Moreover we have the congruences 
$x^p=(l+ap)^p\equiv l^p+ap^2 \equiv l\mod p^3$ and thus $v_p(f(x))\geq 3$. It follows now from Lemma~\ref{lem21} that $f$ has a root in $\Z_p$.
\end{proof}

Let $\zeta$ be a primitive $p$-th root of unity and let $K=\Q(\zeta)$ be the $p$-th cyclotomic field. Let $l^{1/p}$  be  a root in $\overline {\Q}$ of $X^p-l$ and let $L$ denote the field $\Q(l^{1/p})$.   The compositum 
of $L$ and $K$, denoted by $N$, is the splitting field of $X^p-l$. The extension $N/\Q$ is of degree $p(p-1)$.  
 
\begin{prop}
\label{prop23}
Let $\mathfrak{p}$ be the unique prime ideal of $K$ above $p$. Then $\mathfrak{p}$ is unramified in $N$ iff  $l^{p-1}\equiv 1\mod p^2$. Moreover in this case $p\mathcal{O}_L=\mathfrak{s}\mathfrak{t}^{p-1}$ where $\mathfrak{s}$ and $\mathfrak{t}$ are distinct prime ideals of $\mathcal{O}_L$.
\end{prop}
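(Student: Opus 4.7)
The plan is to work locally at $p$, factor $X^p-l$ over $\Q_p$ (and then over $K_\mathfrak{p}$), and read off the local structure of the primes in $L$ and $N$ from that factorization. Throughout I will use the Corollary of Lemma~\ref{lem21}, which says that $X^p-l$ has a root in $\Z_p$ exactly when $l^{p-1}\equiv 1\pmod{p^2}$.

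\smallskip
\emph{The ``if'' direction.} Suppose $l^{p-1}\equiv 1\pmod{p^2}$, and let $\alpha\in\Z_p$ be a root of $X^p-l$. Since $K_\mathfrak{p}=\Q_p(\zeta)$, the full set of roots $\alpha,\alpha\zeta,\dots,\alpha\zeta^{p-1}$ lies in $K_\mathfrak{p}$, so $X^p-l$ splits completely in $K_\mathfrak{p}[X]$. This already forces $\mathfrak{p}$ to split completely, and \emph{a fortiori} be unramified, in $N$.

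\smallskip
\emph{The ``only if'' direction.} Suppose now $l^{p-1}\not\equiv 1\pmod{p^2}$, so $X^p-l$ has no root in $\Q_p$. A standard orbit argument, using that any product of $d<p$ of the roots has the form $\alpha^{d}\zeta^{m}$ and that $\zeta^m\in\Q_p$ forces $p\mid m$, shows $X^p-l$ is irreducible over $\Q_p$. Hence there is a unique prime $\mathfrak{p}_L$ of $L$ over $p$ with $ef=p$. The main point is to rule out $e=1$, $f=p$: in that case $p$ would be a uniformizer of $L_{\mathfrak{p}_L}$ with residue field $\mathbb{F}_{p^p}$; since every element of $\mathbb{F}_p$ is its own $p$-th root in $\mathbb{F}_{p^p}$, a root $\beta$ of $X^p-l$ would satisfy $\beta\equiv l\pmod{p}$, so writing $\beta=l+p\gamma$ and expanding $\beta^p=l$ modulo $p^3$ (noting $\binom{p}{2}p^2\gamma^2=O(p^3)$ for $p\geq 3$) gives $l\equiv l^p\pmod{p^2}$, i.e.\ $l^{p-1}\equiv 1\pmod{p^2}$, contradicting the hypothesis. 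Therefore $p$ is totally ramified in $L$ with ramification index $p$. Combining this with the fact that $\mathfrak{p}$ is the totally ramified prime of degree $p-1$ above $p$ in $K$, and that $\gcd(p,p-1)=1$, the compositum $N=KL$ has a unique prime above $p$ of ramification index $p(p-1)$; in particular $\mathfrak{p}$ is ramified in $N/K$.

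\smallskip
\emph{The factorization of $p\mathcal{O}_L$.} When $l^{p-1}\equiv 1\pmod{p^2}$, I factor $X^p-l=(X-\alpha)Q(X)$ in $\Z_p[X]$ with $Q(X)=\prod_{i=1}^{p-1}(X-\alpha\zeta^i)$. Since $\alpha\in\Q_p$, the Galois action of $\Gal(\overline{\Q}_p/\Q_p)$ on the roots of $Q$ factors through $\Gal(K_\mathfrak{p}/\Q_p)\cong(\Z/p\Z)^{\times}$, which acts transitively on $\{\zeta^i : 1\leq i\leq p-1\}$; hence $Q$ is irreducible over $\Q_p$ and $\Q_p[X]/(Q)\cong\Q_p(\alpha\zeta)=K_\mathfrak{p}$. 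Therefore $L\otimes_\Q\Q_p\cong\Q_p\times K_\mathfrak{p}$: the $\Q_p$-factor corresponds to a prime $\mathfrak{s}$ with $e=f=1$, while the $K_\mathfrak{p}$-factor, since $K_\mathfrak{p}/\Q_p$ is totally ramified of degree $p-1$, contributes a prime $\mathfrak{t}$ with $e=p-1$, $f=1$, and $\mathfrak{s}\neq\mathfrak{t}$. This yields $p\mathcal{O}_L=\mathfrak{s}\mathfrak{t}^{p-1}$.

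\smallskip
The delicate step is the exclusion of the unramified case of degree $p$ in the ``only if'' direction; the rest is local bookkeeping about compositum ramification.
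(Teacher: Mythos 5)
Your proof is correct, but the decisive direction is argued by a genuinely different route than the paper's. The ``if'' part and the factorization $p\mathcal{O}_L=\mathfrak{s}\mathfrak{t}^{p-1}$ are in substance the paper's own arguments: the paper likewise writes $X^p-l=(X-x)f(X)$ over $\Q_p$ and notes that every root of $f$ generates $\Q_p(\zeta)$, which is your Galois-transitivity argument in disguise. Where you diverge is in showing that $l^{p-1}\not\equiv 1 \pmod{p^2}$ forces ramification. The paper completes at $\mathfrak{p}$: since $K_{\mathfrak{p}}$ contains the $p$-th roots of unity, $X^p-l$ is either completely split or irreducible there, so $N_{\mathfrak{p}}=K_{\mathfrak{p}}\otimes_K N$ is a degree-$p$ field extension, and the inert case is excluded structurally---an unramified $N_{\mathfrak{p}}/K_{\mathfrak{p}}$ is generated by a root of unity, so $N_{\mathfrak{p}}/\Q_p$ would be abelian, contradicting the fact that the decomposition group is the whole non-abelian group $\Gal(N/\Q)$. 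You instead stay over $\Q_p$: you prove $X^p-l$ irreducible there, exclude the unramified degree-$p$ completion by the Frobenius-injectivity congruence $\beta\equiv l \pmod{p}$ followed by expansion of $(l+p\gamma)^p$---in effect extending the Corollary of Lemma~\ref{lem21} from $\Z_p$ to unramified extensions of $\Q_p$---and then transfer $e=p$ to $N$ by multiplicativity of ramification indices in the compositum $N=KL$, using $\gcd(p,p-1)=1$. Your route is more elementary and computational, and it immediately yields that $p$ is \emph{totally} ramified in $N$ in this case, a fact the paper records only in the remark following the proposition; the paper's route is computation-free but leans on the cyclotomic description of unramified local extensions and on the non-abelianness of $\Gal(N/\Q)$. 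Two minor repairs, neither a real gap: your parenthetical justification of irreducibility over $\Q_p$ is garbled as stated---a monic degree-$d$ factor only gives $\alpha^{d}\zeta^{m}\in\Q_p$ (up to sign), and one must use $\gcd(d,p)=1$ to manufacture a root $\alpha\zeta^{m'}\in\Q_p$, which is the standard proof that $X^p-a$ is irreducible when $a$ is not a $p$-th power; and the congruence $l\equiv l^p \pmod{p^2}$ is obtained a priori in $\mathcal{O}_{L_{\mathfrak{p}_L}}$ and descends to $\Z$ precisely because $e=1$ there.
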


\begin{proof}
We let $K_{\mathfrak{p}}$ be the completion of $K$ at $\mathfrak{p}$ and we set 
$$
N_{\mathfrak{p}}=K_{\mathfrak{p}}\otimes_K N\simeq K_{\mathfrak{p}}[X]/(x^p-l).
$$
Firstly, we easily check that since $K_{\mathfrak{p}}$ contains the $p$-th roots of unity, then $X^p-l$ is either completely split or irreducible over $K_{\mathfrak{p}}$. 

If $l^{p-1}\equiv 1\mod p^2$, the polynomial $X^p-l$ has a root in $\Q_p$ and so is completely split in $K_{\mathfrak{p}}$. The $K_{\mathfrak{p}}$-algebra $N_{\mathfrak{p}}$ is the 
product of $p$-copies of $K_{\mathfrak{p}}$ and thus $\mathfrak{p}$ is completely split in $N$.

Now if we suppose that $l^{p-1}\not\equiv 1\mod p^2$, then $N_{\mathfrak{p}}/K_{\mathfrak{p}}$ is a field extension of degree $p$. If $\mathfrak{p}$ is inert in $N$ the extension 
$N_{\mathfrak{p}}/K_{\mathfrak{p}}$ is unramified.  Therefore there exists a root of unity 
$\mu$ such that $N_{\mathfrak{p}}=K_{\mathfrak{p}}(\mu)=\Q_p(\zeta, \mu)$. We deduce that $N_{\mathfrak{p}}/\Q_p$ is an abelian extension. Since the Galois groups of $N_{\mathfrak{p}}/\Q_p$ and $N/\Q$ are isomorphic in this case,  this is impossible. We conclude that  $\mathfrak{p}$ is ramified in $N$. 

We now assume that $l^{p-1} \equiv 1\mod p^2$. We consider 
$$L_p=\Q_p\otimes_\Q L\simeq \Q_p[X]/(X^p-l).$$
We denote by $x$ a root of $X^p-l$ in $\Q_p$ and we write $X^p-l=(X-x)f(X)$ with $f(X)\in \Q_p[X]$. If $z$ is a root of $f(X)$ in an algebraic closure of 
$\Q_p$ then $\Q_p(z)=\Q_p(\zeta)$. This implies  that $f(X)$ is irreducible over $\Q_p$ and that
$$
L_p\simeq \Q_p\times \Q_p(\zeta).
$$
The result follows at once from this decomposition. 
\end{proof}

\begin{rmq}
If $l^{p-1}\not\equiv1$ mod $p^2$, then $p$ is totally ramified in $N$. If $l^{p-1}\equiv1$ mod $p^2$, then there are exactly $p$ prime ideals of $N$ above $p$, that we denote by $\mathcal{P}_1,\mathcal{P}_2,\dots,\mathcal{P}_p$. One may observe that the ramification groups $I_1, I_2,\dots,I_p$ of these ideals are precisely the $p$ distinct subgroups of $\Gal(N/\Q)$ of order $p-1$. For $i=1,\dots,p$, the field $L_i=N^{I_i}$ is generated by a root of $X^p-l$. It follows from Proposition~\ref{prop23} that $p\mathcal{O}_{L_i}= \mathfrak{s}_i\mathfrak{t}_i^{p-1}$ with $\mathfrak{s}_i\mathcal{O}_N=\mathcal{P}_i^{p-1}$ and 
$\mathfrak{t}_i\mathcal{O}_N=\prod_{j\neq i}\mathcal{P}_j$.
\end{rmq}

\begin{rmq}
A prime number $p$ such that $2^{p-1}\equiv 1\mod p^2$ is called a Wieferich prime. The only known Wieferich primes are $1093$ and $3511$. However, J.~Silverman has proved that, if the \emph{abc} conjecture holds, then for every integer $n$ there exists an infinite number of primes $p$ such that $n^{p-1}\equiv 1\mod p^2$. Hence there should exist infinitely many Wieferich primes.
\end{rmq}

\subsection{Reduction of the Jacobian of $C$}
\label{jacobian_red}

Let $L$ be a number field. Let us recall that a hyperelliptic curve of genus $g\geq 1$ over $L$ has a rational Weierstrass point if and only if it is birational to a plane curve defined by an equation of the form
$$
y^2=f(x)
$$
where $f\in L[X]$ is a square-free polynomial of degree $2g+1$.

Such curves are sometimes called imaginary hyperelliptic curves. They have a unique point at infinity, that we denote by $P_{\infty}$. We briefly recall the well-known description of $2$-torsion points on the Jacobian of such a curve.

\begin{lem}
\label{archi_connu}
Let $C$ be a smooth hyperelliptic curve of genus $g$ over $\overline{\Q}$, birational to the plane curve defined by the equation
$$
y^2=f(x)
$$
with $f\in \overline{\Q}[X]$ square-free of degree $2g+1$. Let $t_1,t_2,\dots,t_{2g+1}$ be the roots of $f$ over $\overline{\Q}$. For all $i$ we let $P_i:=(t_i,0)$ be the corresponding point on $C$, and $D_i=(P_i-P_{\infty})$ be the divisor class of $P_i-P_{\infty}$. Then $D_1,\dots,D_{2g+1}$ span $\Pic^0(C)[2]$ as a $\mathbb{F}_2$-vector space, and are subject to the single relation
$$
\sum_{i=1}^{2g+1} D_i=0.
$$
\end{lem}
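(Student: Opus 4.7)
The plan is to verify the two divisor class identities by exhibiting explicit principal divisors, and then to conclude by a dimension count using the standard fact that $\Pic^0(C)[2]$ has $\mathbb{F}_2$-dimension $2g$.

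First I would compute the divisors of the rational functions $x - t_i$ and $y$ on $C$. Since $P_i = (t_i, 0)$ is a ramification point of the hyperelliptic projection $x \colon C \to \mathbb{P}^1$, the function $x - t_i$ has a double zero at $P_i$ and a double pole at $P_\infty$, so $\divisor(x - t_i) = 2(P_i - P_\infty)$ and hence $2 D_i = 0$. From $y^2 = f(x)$ and the fact that $\deg f = 2g+1$ is odd, one reads that $y$ has a simple zero at each $P_i$ and a pole of order $2g+1$ at $P_\infty$, so $\divisor(y) = \sum_{i=1}^{2g+1} P_i - (2g+1) P_\infty$, which gives $\sum_{i=1}^{2g+1} D_i = 0$.

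Next, I would show that this is the only relation among the $D_i$. Suppose a subset $I \subseteq \{1, \ldots, 2g+1\}$ satisfies $\sum_{i \in I} D_i = 0$, so that $\sum_{i \in I} P_i - |I| P_\infty = \divisor(g)$ for some $g \in \overline{\Q}(C)^\times$. Because this divisor is fixed by the hyperelliptic involution $\iota \colon (x,y) \mapsto (x,-y)$, we have $\iota^\ast g = c g$ for a constant $c$ with $c^2 = 1$. If $c = +1$, then $g \in \overline{\Q}(x)$ and its divisor on $C$, being pulled back from $\mathbb{P}^1$ along a cover ramified at every Weierstrass point, must have an even coefficient at each $P_i$; this forces $I = \emptyset$. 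If $c = -1$, then $g/y$ is $\iota$-invariant, so we may write $g = y \cdot h(x)$ with $h \in \overline{\Q}(x)$. Then $\divisor(h) = \divisor(g) - \divisor(y) = -\sum_{i \notin I} P_i + (2g+1 - |I|) P_\infty$, and the same even-coefficient condition at the Weierstrass points forces $I = \{1, \ldots, 2g+1\}$.

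It follows that the map $\mathbb{F}_2^{2g+1} \to \Pic^0(C)[2]$ sending $(a_i) \mapsto \sum a_i D_i$ has kernel exactly the line spanned by $(1, \ldots, 1)$, so its image has order $2^{2g}$. Combined with the classical identity $|\Pic^0(C)[2]| = 2^{2g}$ for a smooth curve of genus $g$ over an algebraically closed field of characteristic different from $2$, this forces surjectivity, giving both the spanning statement and the uniqueness of the relation. The main obstacle is the hyperelliptic involution argument of the previous paragraph, where one needs to track the parity of coefficients at Weierstrass points for a function pulled back from the quotient $\mathbb{P}^1$.
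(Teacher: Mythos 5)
Your proof is correct, and it agrees with the paper's on everything except the one step that actually requires an idea: the claim that $\sum_{i=1}^{2g+1} D_i = 0$ is the \emph{only} relation. Both you and the paper compute $\divisor(x-t_i)=2P_i-2P_{\infty}$ and $\divisor(y)=\sum_i P_i-(2g+1)P_{\infty}$, and both conclude with the same dimension count against $|\Pic^0(C)[2]|=2^{2g}$. For the independence, however, the paper simply says it ``follows from the Riemann--Roch theorem'' (implicitly: a relation $\sum_{i\in I}P_i\sim |I|\,P_{\infty}$ with $I$ proper and nonempty may be assumed to have $|I|\le g$ after adding the known full relation, and the gap sequence at $P_{\infty}$ --- the pole orders realizable below $2g+1$ being the even numbers $0,2,\dots,2g$ --- forces the corresponding function to lie in $\overline{\Q}[x]$, whose divisor has even multiplicity at every $P_i$). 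You instead give a complete, self-contained argument via the hyperelliptic involution: an $\iota$-fixed principal divisor yields a semi-invariant function, and in each eigenvalue case ($c=+1$, so the function lies in $\overline{\Q}(x)$, or $c=-1$, so it lies in $y\cdot\overline{\Q}(x)$) the parity of multiplicities at the branch points of the degree-$2$ cover pins down $I=\emptyset$ or $I=\{1,\dots,2g+1\}$. Your route is longer but makes the mechanism explicit and avoids any appeal to Riemann--Roch; it also has the virtue of not needing the reduction to $|I|\le g$, since it handles all subsets symmetrically through the two eigenspaces. All the supporting claims you use check out: $\iota$ fixes each $P_i$ and $P_{\infty}$, so $\iota^{\ast}$ of the putative function differs from it by a constant $c$ with $c^2=1$; the ramification index $2$ at every Weierstrass point gives the even-coefficient condition; and in the $c=-1$ case the residual divisor $-\sum_{i\notin I}P_i+(2g+1-|I|)P_{\infty}$ indeed forces $I$ to be everything. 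One cosmetic remark: you reuse the letter $g$ both for the genus and for the rational function realizing the relation; rename the latter (say $\phi$) to avoid a genuine clash, since the genus appears in the same sentences.
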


\begin{proof}
Let $x$ and $y$ be the rational functions on $C$ given by the coordinates. One checks that
$$
\divisor(x-t_i)=2P_i-2P_{\infty}\qquad \text{for $i=1,\dots,2g+1$}
$$
hence the $D_i$ have order $2$. Moreover, since
$$
\divisor(y)=\left(\sum_{i=1}^{2g+1} P_i\right)-(2g+1)P_{\infty}
$$
the sum of all $D_i$ is zero. It follows from the Riemann-Roch theorem that there is no other relation between the $D_i$. Therefore, the $D_i$ span a $\mathbb{F}_2$-vector space of dimension $2g$. Since the order of the full group $\Pic^0(C)[2]$ is $2^{2g}$, it follows that the $D_i$ span $\Pic^0(C)[2]$.
\end{proof}

We briefly recall our notations from Subsection~\ref{subsection_results}: $p\geq 5$ is a prime number, $\zeta$ is a primitive $p$-th root of unity, and $K=\Q(\zeta)$ is the $p$-th cyclotomic field.

\begin{prop}
\label{prop_jac_red}
Let $C$ be the smooth projective curve over $\Q$ birational to the curve with equation
$$
y^p=x(1-x).
$$
Then:
\begin{enumerate}
\item[(1)] The curve $C$ is hyperelliptic of genus $(p-1)/2$ with a rational Weierstrass point.
\item[(2)] The Jacobian $A$ of $C$ has complex multiplication by $\Z[\zeta]$.
\item[(3)] The variety $A$ has everywhere good reduction over the field $K(2^{1/p})$. Moreover, it has everywhere good reduction over $K$ iff $2^{p-1}\equiv 1\mod p^2$.
\end{enumerate}

\end{prop}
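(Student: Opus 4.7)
The three parts being of quite different character, I would treat them in turn. For (1), the plan is to complete the square in $y^p = x(1-x)$, rewriting it as $(2x-1)^2 = 1 - 4y^p$. The substitution $X = 2x-1$, $Y = y$ exhibits $C$ as a hyperelliptic curve with affine model $X^2 = f(Y)$ for $f(Y) = 1 - 4Y^p$; since $f$ is squarefree of odd degree $p$ (its roots are the pairwise distinct values $4^{-1/p}\zeta^i$), this immediately yields genus $(p-1)/2$ and a rational Weierstrass point at infinity.

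For (2), I would exhibit the $K$-rational order-$p$ automorphism $\sigma:(x,y)\mapsto(x,\zeta y)$ and compute its action on holomorphic differentials. The basis $\omega_a = y^a\, dx$, $a = 1-p, 2-p, \ldots, -(p+1)/2$, of $H^0(C,\Omega^1)$ obtained from a standard residue count at the three ramification points of $C\to\mathbb{P}^1_x$ is diagonalised by $\sigma^*$ with eigenvalues $\zeta^a$, yielding $(p-1)/2$ distinct primitive $p$-th roots of unity; this forces the minimal polynomial of $\sigma_*$ on the tangent space, hence on $A$, to be $\Phi_p$. Combined with $\dim A = (p-1)/2 = [K:\Q]/2$, this gives CM by $\Z[\zeta]$.

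For (3) outside $p$: the discriminant of $f$ is $\pm 4^{p-1}p^p$, so the hyperelliptic model is smooth at every $\ell \notin \{2,p\}$; at $\ell = 2$, the original model $y^p - x + x^2 = 0$ has $\partial_x = 2x - 1 \equiv 1 \pmod 2$ nowhere vanishing, hence is smooth modulo $2$. So $C$, and thus $A$, has good reduction at every prime of $\Q$ not dividing $p$. For reduction at primes above $p$, the strategy is to build an explicit smooth model locally. Over $F = K(2^{1/p})$ with $\beta = 2^{1/p}$, the substitution $Y = \beta^{-2}Z$ transforms the hyperelliptic equation into $X^2 = 1 - Z^p$; expanding $Z = 1 + W$ yields $X^2 + W^p + pW\cdot P(W) = 0$ for some $P \in \Z[W]$ with $P(0) = 1$, whose only singularity modulo $p$ is at $(X,W)=(0,0)$. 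I would resolve this using the uniformizer $\pi = 1 - \zeta$ of $K_\mathfrak{p}$ together with the identity $p = \pi^{p-1} u$ for a unit $u$ with $\bar u = -1$ (Wilson): the substitutions $W = \pi V$ and $X = \pi^{(p+1)/2}X'$ (integer exponent because $p$ is odd), followed by $V = \alpha + \pi V'$ for each lift $\alpha \in \{0, \ldots, p-1\}$ of a root of $V^p - V$, should yield charts whose reduction modulo $\pi$ is a smooth affine parabola; patching these produces a smooth proper model of $C$ over $\mathcal{O}_{F,\mathfrak{P}}$, hence good reduction of $A$. Over $K$ itself the same construction runs precisely when $2^{1/p}$ is locally available at $\mathfrak{p}$, which by Proposition~\ref{prop23} is the Wieferich condition $2^{p-1} \equiv 1 \pmod{p^2}$.

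The main obstacle will be making the resolution at $p$ rigorous: verifying that the $p$ affine charts glue with a chart at infinity to give a genuine smooth proper model, and treating uniformly both the completely-split and totally-ramified behaviours of $\mathfrak{p}$ in $K(2^{1/p})$ allowed by Proposition~\ref{prop23}.
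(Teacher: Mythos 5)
Parts (1) and (2) of your proposal are correct and close in spirit to the paper: your completion of the square is the paper's own substitution ($v=x-1/2$, $u=-y$) in different coordinates, and your differential computation for (2) (the basis $\omega_a=y^a\,dx$, $1-p\le a\le -(p+1)/2$, with $\sigma^*\omega_a=\zeta^a\omega_a$, forcing $\Phi_p(\sigma)=0$ since it vanishes on the tangent space, plus $\dim A=(p-1)/2$) is a correct and more detailed version of the paper's one-line appeal to the $\mu_p$-action. For (3) away from $p$, your discriminant argument is essentially sound, with the small caveat that at $\ell=2$ smoothness of the affine model $y^p=x-x^2$ does not by itself give good reduction of $C$: you must also handle the point at infinity, e.g.\ by noting that mod $2$ the normalization of the projective closure is a tame degree-$p$ Kummer cover of $\mathbb{P}^1$ with the same ramification data, hence smooth of genus $(p-1)/2$.

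The genuine gap is at the prime above $p$, and it is not just the bookkeeping you acknowledge: the patching step fails as described, and the strategy aims at the wrong statement. Your chart $\pi X'^2=-(V^p+uVP(\pi V))$ is indeed smooth over the local ring (one checks $\partial_V\equiv \bar u=-1$ by Wilson, as you indicate), but its special fibre is the $p$ \emph{disjoint} lines $V^p-V=0$; meanwhile, in any proper flat model containing your charts, the points of the generic fibre where $W$ is a unit must reduce to (the strict transform of) the original component $X^2+W^p\equiv 0$. So any model assembled from your charts has \emph{reducible} special fibre, and it can never be smooth and proper: the special fibre of a smooth proper model of a geometrically connected curve is connected and smooth, hence irreducible. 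What your blow-ups can produce is at best a semistable model whose fibre is a curve of compact type; good reduction of $A=\Pic^0$ would then be a further nontrivial assertion about that fibre which the proposal never addresses. In short, good reduction of the curve $C$ and of its Jacobian are not equivalent, only the latter is claimed, and a proof pinned to a smooth model of $C$ at $\mathfrak{p}$ is the wrong target (the stable reduction of Fermat quotients at $p$ is genuinely delicate, of Coleman--McCallum type). Note also that your argument gives nothing for the ``only if'' direction over $K$: failure of your particular construction when $2^{1/p}\notin K_{\mathfrak{p}}$ is not a proof of bad reduction. The paper's route avoids all of this and is the idea missing from your proposal: CM gives potential good reduction, so the Serre--Tate criterion \cite[Corollary~2]{st68} applies; Lemma~\ref{archi_connu} identifies $K(A[2])=K(2^{1/p})$; Greenberg's theorem \cite[Theorem~4]{greenberg81} places $K(A[p])$ inside $K(\{\varepsilon^{1/p}\ |\ \varepsilon\in U\})$, hence unramified outside $\mathfrak{p}$; one concludes good reduction outside $\mathfrak{p}$, everywhere good reduction over $K(2^{1/p})$, and good reduction over $K$ exactly when $\mathfrak{p}$ is unramified in $K(2^{1/p})/K$, which Proposition~\ref{prop23} translates into the condition $2^{p-1}\equiv 1 \bmod p^2$. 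Reducing everything to ramification in torsion fields, with Greenberg's theorem as input, is what your explicit-model approach cannot replace.
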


\begin{proof}
(1) Putting $v=x-1/2$ and $u=-y$, the equation $y^p=x(1-x)$ is equivalent to the equation $v^2=u^p+1/4$, hence the result.

(2) We note that the group scheme $\mu_p$ acts on $C$ (by multiplying $y$ by a $p$-th root of unity), and this action induces an action of $\mu_p$ on $A$. Hence, the abelian variety $A$ has complex multiplication by $\Z[\zeta]$. In particular, it has potential good reduction.

(3) For any integer $m$ we let $K(A[m])$ be the smallest extension of $K$ over which the elements of $A[m]$ are rational. By virtue of Lemma~\ref{archi_connu}, the field $K(A[2])$ is the splitting field of the polynomial $u^p+1/4$ over $K$, hence $K(A[2])=K(2^{1/p})$. Next we observe that since $A/K$ has  potential good reduction,  we may use \cite[Corollary~2]{st68} in order to  study its reduction.  
From \cite[Theorem~4]{greenberg81} we know that $K(A[p])$ is a subextension of the field $K(\{\varepsilon^{1/p}\ |\ \varepsilon \in U\})$, where $U$ denotes the group of cyclotomic units in $K^+$ (the maximal real subfield of $K$). We therefore deduce  that $K(A[p])/K$ is unramified outside $\mathfrak{p}$, the unique prime of $K$ above $p$.  By \cite[Corollary~2~b)]{st68}, we conclude that $A/K$ has good reduction outside $\mathfrak{p}$, everywhere good reduction over $K(2^{1/p})$, and everywhere good reduction over $K$ if and only if $\mathfrak{p}$ is unramified in  $K(2^{1/p})/K$. The result then follows from Proposition~\ref{prop23}.
\end{proof}

\subsection{Proof of Theorem~\ref{theorem12}}

We now use the notations of Subsection~\ref{subsection_results}.

We let $F$ be either the field $K$, if $2^{p-1}\equiv 1\mod p^2$, or  the field $K(2^{1/p})$ otherwise. The torsion subgroup of  $A(K)$ has been studied by various authors (see \cite{gr78}, \cite{greenberg81} and \cite{tzermias}). In particular we know from \cite[Theorem 1]{tzermias} that
$$
A(K)_{\tors}=A[\lambda^3]
$$
with $\lambda=(1-\zeta)$. We quickly check that this implies that $A[p^{\infty}](F)=A[\lambda^3]$. Indeed, the result is true if $F=K$. Now suppose that $F=K(2^{1/p})$. Since $F/K$ is of degree $p$, if there were a $\lambda^4$-torsion (but not $\lambda^3$-torsion) point $R$ rational over $F$, then we would have $F=K(R)$ and $F$ would be a subfield of $K(A[p])$. But this is impossible since $F/K$ is ramified at every prime ideal above $2$ while $K(A[p])$ is unramified outside $p$.

We let $P$ be a primitive $\lambda^3$-torsion point, and we fix some $Q \in A(\overline{\Q})$ such that $\lambda Q=P$. Then the extension $F(\lambda^{-1}P)/F$ is nothing else than $F(Q)/F$. According to Kummer's criterion, in order to prove the theorem it suffices to show that $F(Q)/F$ has a Kummer generator which is a unit. This is the goal of the next proposition, which can be easily deduced from \cite[Theorem~4]{greenberg81}.

\begin{prop}
\label{prop41}
There exists a cyclotomic unit $\mu$ of $K$ such that $F(Q)=F(\mu^{1/p})$.
\end{prop}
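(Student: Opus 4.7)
The strategy is to combine Greenberg's theorem \cite[Theorem~4]{greenberg81} with Kummer theory: first show that $Q$ lies in $A[p]$, so that $K(Q)$ sits inside a concrete Kummer extension of $K$ generated by $p$-th roots of cyclotomic units, and then extract a Kummer generator for the degree-$p$ sub-extension $K(Q)/K$.

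First I would check that $Q \in A[p]$. Indeed, since $\lambda Q = P$ and $\lambda^3 P = 0$, one has $\lambda^4 Q = 0$, and in $\Z[\zeta]$ the rational prime $p$ is associate to $\lambda^{p-1}$. Because $p \geq 5$ implies $p-1 \geq 4$, this gives $A[\lambda^4] \subseteq A[\lambda^{p-1}] = A[p]$, so $Q \in A[p]$ and hence $K(Q) \subseteq K(A[p])$. Greenberg's theorem then embeds $K(Q)$ into the Kummer extension
\[
E := K\bigl(\{\varepsilon^{1/p} : \varepsilon \in U\}\bigr),
\]
where $U$ is the group of cyclotomic units of $K^+$.

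Next I would verify that $[K(Q):K] = p$. By Tzermias \cite[Theorem~1]{tzermias}, $A(K)_{\tors} = A[\lambda^3]$; in particular $P \in A(K)$. Since $G = A[\lambda]$ is the constant group scheme of order $p$ over $K$, the fibre $\lambda^{-1}(P)$ is a $G$-torsor defined over $K$, so $K(Q)/K$ is cyclic of degree dividing $p$. It cannot be trivial, because $Q$ is primitive $\lambda^4$-torsion and therefore does not lie in $A[\lambda^3] = A(K)_{\tors}$; hence $K(Q)/K$ has degree exactly $p$.

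Finally, Kummer theory applied to $E/K$ provides a bijection between subgroups of $\bar U := U \cdot (K^\times)^p / (K^\times)^p$ and sub-extensions of $E/K$. The sub-extension $K(Q)$ of degree $p$ corresponds to a one-dimensional $\mathbb{F}_p$-subspace $W \subseteq \bar U$; choosing any $\mu \in U$ whose class in $\bar U$ generates $W$ yields $K(Q) = K(\mu^{1/p})$. Since $K \subseteq F$, compositum with $F$ gives $F(Q) = F(\mu^{1/p})$, as desired. The only delicate step is the degree computation, which relies crucially on Tzermias's description of $A(K)_{\tors}$; once that is in hand, everything else is a formal consequence of Greenberg's theorem and the Kummer dictionary.
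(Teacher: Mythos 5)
Your proof is correct, and it takes a genuinely different route from the paper's at the decisive step. The two arguments coincide up front: Greenberg's theorem \cite[Theorem~4]{greenberg81} places $K(Q)$ inside $M=K(\{\varepsilon^{1/p} \mid \varepsilon\in U\})$, and $[K(Q):K]=p$ (your detailed check, via $\lambda^4Q=0$, $A[\lambda^{p-1}]=A[p]$ and Tzermias's $A(K)_{\tors}=A[\lambda^3]$, makes explicit what the paper asserts in one line). The difference lies in how one shows that a degree-$p$ subextension of $M/K$ has the form $K(\mu^{1/p})$ with $\mu$ a cyclotomic unit. The paper argues by counting: the number $n_p$ of degree-$p$ subextensions of $M/K$ equals $(p^t-1)/(p-1)$ with $t=\dim_{\mathbb{F}_p}U/U^p$, and Lemma~\ref{lem42} shows that the subextensions generated by $p$-th roots of cyclotomic units are in bijection with the lines of $U/U^p$, whence $n'_p=n_p$ and the two sets coincide. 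Lemma~\ref{lem42} --- and also the identification $\Gal(M/K)\simeq \mathrm{Hom}(U/U^p,\mu_p)$ implicit in the computation of $n_p$, which requires $U\cap(K^{\times})^p=U^p$ --- rests on the hypothesis $p\nmid h^+$: one needs the index of $U$ in the unit group of $K^+$ (which equals $h^+$) to be prime to $p$ in order to promote $\gamma^p\in U$ to $\gamma\in U$. You bypass all of this by running the Kummer correspondence directly with $\overline{U}=U\cdot(K^{\times})^p/(K^{\times})^p$: every class of $\overline{U}$ is tautologically represented by a cyclotomic unit, so any line of $\overline{U}$, hence any degree-$p$ subextension of $M/K$, is generated by some $\mu^{1/p}$ with $\mu\in U$. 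This uses only the surjectivity of $U\to\overline{U}$, not the injectivity content of Lemma~\ref{lem42}, so your proof of the proposition is unconditional, whereas the paper's proof needs $p\nmid h^+$ (harmless in context, since Theorem~\ref{theorem12} assumes it anyway). What the paper's longer route buys is the finer statement of Lemma~\ref{lem42} --- an exact enumeration of the distinct Kummer subextensions cut out by cyclotomic units --- which your argument neither needs nor provides.
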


\begin{proof}
The field $F(Q)$ is the compositum of $F$ and $K(Q)$, which are linearly disjoint over $K$. Moreover, $K(Q)/K$ is a Kummer subextension of degree $p$ of $K(A[p])$.  We let $K^+$ be the maximal real subfield of $K$. From \cite[Theorem 4]{greenberg81}  we know that $K(A[p])$ is a subfield  of $M=K(\{\varepsilon^{1/p}\ |\ \varepsilon \in U\})$, where $U$ denote the group of cyclotomic units of $K^+$. Therefore, in order to prove the proposition,  it is enough to show that  every subextension of  
$M$ of degree $p$ over $K$ is generated by a $p$-th root of a cyclotomic unit. We shall prove the claim by showing that,  in the field extension $M/K$, the number $n_p$ of subextensions  of degree $p$, is equal to the number $n'_p$ of distinct  subextensions generated by a $p$-th root of a cyclotomic unit. We know from Galois theory that $n_p$ is equal to the number of subgroups of $\mathbb{F}_p^t$ of index $p$, where $t$ denotes the dimension of the $\mathbb{F}_p$-vector space $U/U^p$. Hence we conclude that $n_p=\frac{p^t-1}{p-1}$. 

Now let us compute $n'_p$.

\begin{lem}
\label{lem42}
We assume that $p$ doesn't divide the class number of $K^+$. Let $\alpha$ and $\beta$ be cyclotomic units. Then $K(\alpha^{1/p})=K(\beta^{1/p})$ iff there exists a cyclotomic unit $\gamma$ and an integer $k$ coprime to $p$, such that $\beta=\gamma^p\alpha^k$. 
\end{lem}

\begin{proof}
Suppose that $K(\alpha^{1/p})=K(\beta^{1/p})$.  From  Kummer theory  we deduce  that  there exists  $\delta \in K$  and an integer $k$ coprime to $p$, such that $\beta=\delta^p\alpha^k$. Since $\alpha$ and $\beta$ are units, then $\delta$ is a unit of $K$ and so  can be written as a product $\delta=\zeta \gamma$ where $\zeta$ is a $p$-th root 
of unity and $\gamma$ a unit in $K^+$. Therefore $\beta=\gamma^p\alpha^k$ and thus  $\gamma^p$ is a cyclotomic unit of $K^+$. Since under our hypotheses the index of $U$ in the full group of units of $K^+$ is coprime to  $p$,  we conclude that $\gamma$ belongs to $U$ as claimed.
\end{proof}

It follows from Lemma~\ref{lem42} that there exists a bijection between the set of subextensions of $M/K$ generated by a $p$-th root of a cyclotomic unit, and the set of lines of the $\mathbb{F}_p$-vector space $U/U^p$. Therefore we obtain that $n'_p=\frac{p^t-1}{p-1}=n_p$, which completes the proof of  Proposition~\ref{prop41} and of Theorem~\ref{theorem12}.
\end{proof}


\section{Points of infinite order and non-trivial Galois modules}
\label{section_infinite}

In this section, we first explain how to derive, from the work of Levin and the second author \cite{GillibertLevin}, non-vanishing results for the class-invariant homomorphism on Jacobians of hyperelliptic curves. Then we focus on the family of Jacobians previously considered.

\subsection{Class-invariants for Jacobians}
\label{subsection31}

Let $C$ be a smooth, geometrically connected, projective curve over $\Q$, and let $S$ be the set of primes of bad reduction of $C$. If $L$ is a number field, we denote by $\mathcal{O}_{L,S}$ the set of $S_L$-integers of $L$, where $S_L$ is the set of places of $L$ above $S$. In fact, $\mathcal{O}_{L,S}=\mathcal{O}_L[S^{-1}]$.

Let $\C\to \Spec(\Z_S)$ be the smooth projective model of $C$. If $P\in C(L)$ is a point, then by abuse of notation we also denote by $P:\Spec(\mathcal{O}_{L,S})\to\C$ the associated section of the integral model of $C$.

\begin{prop}
\label{super}
Assume $C$ is a hyperelliptic curve over $\Q$ with a rational Weierstrass point, and let $m>1$ be an integer. Then there exists an infinity of imaginary quadratic fields $L/\Q$ with a point $P\in C(L)$ such that the specialisation map
\begin{align*}
P^*:\Pic^0(\C)[m]\;\longrightarrow & \;\Cl(\mathcal{O}_{L,S}) \\
\mathcal{L}\;\longmapsto & P^*\mathcal{L}
\end{align*}
is injective.
\end{prop}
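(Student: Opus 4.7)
The plan is to construct the pairs $(L,P)$ explicitly from the hyperelliptic model $C\colon y^2=f(x)$, where $f\in\Q[x]$ is squarefree of degree $2g+1$ with the rational Weierstrass point $P_\infty$ at infinity. For any $x_0\in\Q$ with $f(x_0)<0$ and $f(x_0)$ not a rational square, the field $L_{x_0}:=\Q(\sqrt{f(x_0)})$ is imaginary quadratic, and $P_{x_0}:=(x_0,\sqrt{f(x_0)})$ lies in $C(L_{x_0})$. It suffices to show that for infinitely many such $x_0$ the specialisation $P_{x_0}^{*}$ is injective on $\Pic^0(\C)[m]$.

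Each non-trivial class $\mathcal{L}\in\Pic^0(\C)[m]$ of order $d\mid m$ corresponds to a connected cyclic \'etale cover $\pi_\mathcal{L}\colon\C_\mathcal{L}\to\C$ of degree $d$, defined over the integral model away from $S$ and from primes dividing $d$. The pullback $P_{x_0}^{*}\mathcal{L}$ vanishes in $\Cl(\mathcal{O}_{L_{x_0},S})$ precisely when $P_{x_0}$ lifts to an $\mathcal{O}_{L_{x_0},S}$-integral section of $\pi_\mathcal{L}$; since $\pi_\mathcal{L}$ is \'etale outside $S$, this is equivalent to $\pi_\mathcal{L}^{-1}(P_{x_0})$ containing an $L_{x_0}$-rational point. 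Hence the injectivity of $P_{x_0}^{*}$ on $\Pic^0(\C)[m]$ amounts to ruling out such a rational lift for every non-trivial $\mathcal{L}$. In the special case $m=2$, Lemma~\ref{archi_connu} makes this extremely concrete: $\Pic^0(\C)[2]$ is generated by the classes $D_i=[P_i-P_\infty]$, and $P_{x_0}^{*}D_i$ is a $2$-torsion class in $\Cl(\mathcal{O}_{L_{x_0},S})$ whose square is the ideal $(x_0-t_i)$, so its non-triviality reduces to $x_0-t_i$ not being a square (ideal) in $L_{x_0}$.

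To produce the required family of $x_0$, I would apply a Hilbert-irreducibility argument to the composite cover $\C_\mathcal{L}\to C\to\mathbb{P}^1_\Q$ given by the $x$-coordinate: for $x_0\in\Q$ outside a thin set, the Galois group of the fiber over $x_0$ acts transitively on the geometric points above $P_{x_0}$, so no $L_{x_0}$-rational lift can exist. Since $\Pic^0(\C)[m]$ is finite, the union of the thin exceptional sets over all non-trivial $\mathcal{L}$ is still thin; intersecting its complement with the semi-algebraic (hence non-thin) condition $f(x_0)<0$ yields infinitely many admissible $x_0$, and the corresponding fields $L_{x_0}=\Q(\sqrt{f(x_0)})$ are pairwise distinct outside a further thin set.

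The main obstacle is executing this Hilbert-irreducibility / Chebotarev step uniformly over all non-trivial $\mathcal{L}\in\Pic^0(\C)[m]$ while compatibly enforcing the imaginary quadratic condition, and carefully bookkeeping ramification at primes in $S$ and at primes dividing $d$ so that the equivalence between triviality of $P_{x_0}^{*}\mathcal{L}$ and the existence of an integral lift actually goes through. This uniform non-vanishing of class-invariants along a one-parameter family of imaginary quadratic points is precisely the content of the main theorem of \cite{GillibertLevin}, which I would invoke to complete the argument.
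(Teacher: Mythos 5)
The paper's entire proof of this proposition is a citation: it is Corollary~3.2 of \cite{GillibertLevin}, and your proposal ultimately does the same thing --- your last paragraph concedes that the ``uniform non-vanishing'' step is ``precisely the content of the main theorem of \cite{GillibertLevin}, which I would invoke.'' So as a blind proof this is not an independent argument: the crux is deferred to the very reference that constitutes the paper's proof, and everything before that deferral is a sketch of a \emph{different} strategy. Unfortunately that sketch contains a genuine error. You assert that $P_{x_0}^{*}\mathcal{L}$ vanishes in $\Cl(\mathcal{O}_{L_{x_0},S})$ precisely when $P_{x_0}$ lifts to an ($L_{x_0}$-rational, equivalently integral) point of the cover $\pi_{\mathcal{L}}$. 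Only one direction is true. The fiber $\pi_{\mathcal{L}}^{-1}(P_{x_0})$ is a $\mu_d$-torsor over $\Spec(\mathcal{O}_{L_{x_0},S})$, i.e.\ a class in $H^1(\mathcal{O}_{L_{x_0},S},\mu_d)$, and the Kummer sequence
$$
1\longrightarrow \mathcal{O}_{L,S}^{\times}/(\mathcal{O}_{L,S}^{\times})^d \longrightarrow H^1(\mathcal{O}_{L,S},\mu_d)\longrightarrow \Cl(\mathcal{O}_{L,S})[d]\longrightarrow 0
$$
(this is exactly the exactness of the lower line of diagram~\eqref{diagram1} in the paper, and the source of the paper's ``Kummer criterion'') shows that the ideal class can vanish while the torsor is nontrivial: it vanishes as soon as the Kummer generator is a \emph{unit} times a $d$-th power, which does not require a rational lift. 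Concretely, for $m=2$ the class $P_{x_0}^{*}D_i$ is trivial whenever $x_0-t_i$ is a unit times a square, e.g.\ $-1$ times a square, even though no rational point lies above $P_{x_0}$.

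This kills the Hilbert-irreducibility step as a route to the conclusion: HIT can show that for $x_0$ outside a thin set the fiber is irreducible, hence that there is no rational lift and the torsor class in $H^1$ is nonzero --- but it says nothing about whether that class dies under the map to $\Cl(\mathcal{O}_{L,S})$, which is the actual statement to be proved. Indeed no thin-set argument can see the class group of the varying field $L_{x_0}$; this is why \cite{GillibertLevin} proceeds instead by an analytic/sieve argument, specializing at integers $n$ with $f(n)$ squarefree (whence the count $\gg X^{1/(2g+1)}/\log X$ of fields quoted after the proposition) and exploiting the explicit factorization $f(n)=\prod_i(n-t_i)$ together with ramification in $\Q(\sqrt{f(n)})$ to rule out the unit ambiguity for every nonzero $\mathbb{F}_2$-combination of the $D_i$. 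Your opening reductions (the points $(x_0,\sqrt{f(x_0)})$ with $f(x_0)<0$, the use of Lemma~\ref{archi_connu}, the ideal $\mathfrak{b}_i$ with $\mathfrak{b}_i^2=(x_0-t_i)$) are the right starting point and match the setup of \cite{GillibertLevin}, but the bridge you build from there --- rational lifts and thin sets --- proves the wrong statement, and the correct bridge is precisely the content of the cited result rather than something it can be used to shortcut.
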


\begin{proof}
See \cite[Cor.~3.2]{GillibertLevin}.
\end{proof}

Moreover, the infinity in the statement above can be made explicit: if we let $g$ be the genus of $C$, then the number of imaginary quadratic fields $L$ of discriminant $d_L$ such that $|d_L|<X$ is $\gg X^{\frac{1}{2g+1}}/\log X$.

We now want to underline the consequences of this result for the study of the class-invariant homomorphism.

Let us assume that the hypotheses of Proposition~\ref{super} above hold. Let $\J\rightarrow \Spec(\Z_S)$ be the N\'eron model of $\Jac(C)$, which is an abelian scheme over $\Spec(\Z_S)$. Because $\C\to\Spec(\Z_S)$ is a smooth relative curve, with geometrically connected fibers, its relative Picard functor is representable by a separated scheme, and, according to \cite[\S{}9.5, Thm.~1]{NeronModels}, the connected component of its Picard functor is isomorphic to $\J$. Let us now fix a point $\varepsilon\in C(\Q)$. Then, according to \cite[\S{}8.2, Prop.~4]{NeronModels}, we can associate to the corresponding section $\varepsilon:\Spec(\Z_S)\to\C$ a universal line bundle, namely, the Poincar{\'e} line bundle, that we denote by $\mathcal{P}_{\C}$, on $\C\times \J$. It follows from the universal property of $\mathcal{P}_{\C}$ that, given a point $P\in C(L)$, the map
$$
P^*:\Pic^0(\C)\longrightarrow \Cl(\mathcal{O}_{L,S})
$$
can be identified with the map
\begin{align*}
\Jac(C)(\Q)\;\longrightarrow & \;\Cl(\mathcal{O}_{L,S}) \\
Q\;\longmapsto & \; (P\times Q)^*\mathcal{P}_{\C}
\end{align*}

We now consider the embedding $C\rightarrow \Jac(C)$ associated with $\varepsilon$. Using the  smoothness of $\C$ and the universal property of $\J$, this embedding extends into a closed immersion $i:\C\rightarrow \J$. This immersion gives rise to an isomorphism between $\J$ and its dual abelian scheme, encoded again into a Poincar{\'e} line bundle, that we denote by $\mathcal{P}_{\J}$, on $\J\times\J$. By construction, we have that $\mathcal{P}_{\C}=(i\times \mathrm{id})^*\mathcal{P}_{\J}$, hence
$$
(P\times Q)^*\mathcal{P}_{\C}=(i(P)\times Q)^*\mathcal{P}_{\J}
$$

Finally, according to the Manin-Mumford conjecture (proved by Raynaud), only finitely many $i(P)$ are torsion points. Therefore, we deduce from Proposition~\ref{super} the following:

\begin{cor}
\label{jacobian_cor}
Assume $C$ is a hyperelliptic curve over $\Q$ with a rational Weierstrass point, and let $m>1$ be an integer. Then there exists an infinity of imaginary quadratic fields $L/\Q$ with a point $P\in \Jac(C)(L)$ of infinite order, such that the map
\begin{align*}
\Jac(C)(\Q)[m]\;\longrightarrow & \;\Cl(\mathcal{O}_{L,S}) \\
Q\;\longmapsto & \;(P\times Q)^*\mathcal{P}_{\J}
\end{align*}
is injective.
\end{cor}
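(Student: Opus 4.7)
The plan is to combine three ingredients assembled in this subsection: the geometric non-vanishing result of Proposition~\ref{super}, the identification of $P^*$ with pullback along the Poincar\'e bundle, and Raynaud's theorem (the Manin-Mumford conjecture).

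First, I would apply Proposition~\ref{super} with the given integer $m$, obtaining an infinite family $\{(L_n, P_n)\}_{n\geq 1}$ of imaginary quadratic fields $L_n/\Q$ equipped with points $P_n\in C(L_n)$ such that each specialisation
$$
P_n^*:\Pic^0(\C)[m]\longrightarrow \Cl(\mathcal{O}_{L_n,S})
$$
is injective. Since $\J$ is an abelian scheme over $\Spec(\Z_S)$, the N\'eron mapping property yields $\Pic^0(\C)=\J(\Z_S)=\Jac(C)(\Q)$, so each $P_n^*$ can be viewed as a map defined on $\Jac(C)(\Q)[m]$.

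Second, I would translate this via the identifications already carried out in the text: for $Q\in\Jac(C)(\Q)$ corresponding to $\mathcal{L}\in\Pic^0(\C)$, one has
$$
P_n^*\mathcal{L}=(P_n\times Q)^*\mathcal{P}_{\C}=(i(P_n)\times Q)^*\mathcal{P}_{\J}.
$$
In this way the injectivity supplied by Proposition~\ref{super} is precisely the injectivity of the map in the corollary with $P:=i(P_n)\in\Jac(C)(L_n)$.

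Third and finally, I would verify that $i(P_n)$ has infinite order in $\Jac(C)(L_n)$ for infinitely many $n$. By Raynaud's theorem, the intersection $i(C)(\overline{\Q})\cap\Jac(C)(\overline{\Q})_{\tors}$ is finite, hence at most finitely many of the geometric points $i(P_n)$ can be torsion; discarding these leaves an infinite subfamily meeting all the requirements of the corollary. The argument is essentially a direct assembly, with the deep content already concentrated in Proposition~\ref{super} and in the Manin-Mumford conjecture; the one piece of bookkeeping worth highlighting is the identification $\Pic^0(\C)[m]\simeq\Jac(C)(\Q)[m]$ via the N\'eron property, which is exactly what lets the line-bundle pullback be reinterpreted as the desired map on $m$-torsion of the Jacobian.
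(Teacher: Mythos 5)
Your proposal is correct and matches the paper's own argument step for step: Proposition~\ref{super} for the injectivity, the Poincar\'e-bundle identifications $P^*\mathcal{L}=(P\times Q)^*\mathcal{P}_{\C}=(i(P)\times Q)^*\mathcal{P}_{\J}$, and Raynaud's theorem (Manin--Mumford) to discard the finitely many fields whose point $i(P_n)$ is torsion. The one piece you make explicit that the paper leaves implicit --- the identification $\Pic^0(\C)[m]\simeq\Jac(C)(\Q)[m]$ via representability of the relative Picard functor and the N\'eron property --- is a welcome clarification, not a deviation.
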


Let us translate Corollary~\ref{jacobian_cor} in terms of the class-invariant homomorphism. We now assume that $\Jac(C)$ has a $\Q$-rational point $Q$ of prime order $p\neq 2$. We shall apply Corollary~\ref{jacobian_cor} with $m=p$. The point $Q$ defines an embedding $(\Z/p\Z)_{\Q}\to \Jac(C)$ where $(\Z/p\Z)_{\Q}$ denotes the constant group scheme over $\Spec(\Q)$. Because $(\Z/p\Z)_{\Z_S}$ is smooth, this embedding extends into a morphism $f:(\Z/p\Z)_{\Z_S}\to \J$. The image of $f$ is isomorphic to $(\Z/p\Z)_{\Z_S}$ (in other terms, $f$ is an embedding). Indeed, $(\Z/p\Z)_{\Z_S}$ is the unique finite flat group scheme over $\Spec(\Z_S)$ extending $(\Z/p\Z)_{\Q}$.

Hence we have an exact sequence (for the fppf topology on $\Spec(\Z_S)$)
$$
\begin{CD}
0 @>>> (\Z/p\Z)_{\Z_S} @>f>> \J @>\varphi>> \mathcal{B} @>>> 0\\
\end{CD}
$$
where $\mathcal{B}$ is an abelian scheme over $\Spec(\Z_S)$. By duality of abelian schemes, we have a dual exact sequence
\begin{equation}
\label{dualsequence}
\begin{CD}
0 @>>> \mu_p @>>> \mathcal{B}^t @>\varphi^t>> \J @>>> 0\\
\end{CD}
\end{equation}
where $\mathcal{B}^t$ is the dual abelian scheme of $\mathcal{B}$ (remember that $\J$ is self-dual), $\varphi^t$ is the dual isogeny of $\varphi$, and $\mu_p$ is the group scheme of roots of unity.

If $L$ is a number field, we define $\psi_{L,S}$ as the composition of the maps
$$
\begin{CD}
\Jac(C)(L)=\J(\mathcal{O}_{L,S}) @>\delta>> H^1(\mathcal{O}_{L,S},\mu_p) @>\pi>> \Pic((\Z/p\Z)_{\mathcal{O}_{L,S}})\simeq \Cl(\mathcal{O}_{L,S})^p \\
\end{CD}
$$
where $\delta$ is the coboundary map deduced from the exact sequence \eqref{dualsequence} by considering $\Spec(\mathcal{O}_{L,S})$-sections, and $\pi$ is Waterhouse's morphism. In fact, $\psi_{L,S}$ is nothing else that the class-invariant homomorphism associated to the isogeny $\varphi^t$.

\begin{cor}
\label{class_invariant_cor}
There exists an infinity of imaginary quadratic fields $L/\Q$ with a point $P\in \Jac(C)(L)$ of infinite order such that $\psi_{L,S}(P)\neq 0$.
\end{cor}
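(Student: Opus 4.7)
My plan is to apply Corollary~\ref{jacobian_cor} with $m=p$ and then translate the resulting injectivity statement into the non-vanishing of $\psi_{L,S}$. The bridge between the two is the observation that the class-invariant homomorphism $\psi_{L,S}$, evaluated at a point $P$, records exactly the Poincar{\'e}-bundle pullbacks $(P\times kQ)^*\mathcal{P}_{\J}$ as $k$ runs over $\Z/p\Z$.

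First, I would invoke Corollary~\ref{jacobian_cor} with $m=p$ to obtain infinitely many imaginary quadratic fields $L/\Q$ and points $P\in\Jac(C)(L)$ of infinite order such that $R\mapsto (P\times R)^*\mathcal{P}_{\J}$ is injective on $\Jac(C)(\Q)[p]$. Since $Q$ has order exactly $p$, injectivity forces $(P\times Q)^*\mathcal{P}_{\J}$ to be a non-trivial class in $\Cl(\mathcal{O}_{L,S})$.

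Next, I would identify $\psi_{L,S}(P)\in \Cl(\mathcal{O}_{L,S})^p$ with the $p$-tuple $\bigl((P\times kQ)^*\mathcal{P}_{\J}\bigr)_{k=0}^{p-1}$. The splitting $\Pic((\Z/p\Z)_{\mathcal{O}_{L,S}})\simeq \Cl(\mathcal{O}_{L,S})^p$ is indexed by the $p$ sections of $(\Z/p\Z)_{\mathcal{O}_{L,S}}\to\Spec(\mathcal{O}_{L,S})$, which via the embedding $f$ correspond to the points $\{kQ\}_{k=0}^{p-1}\subset\Jac(C)(\Q)[p]$. Under Cartier duality the extension class of the dual sequence~\eqref{dualsequence} is precisely $Q\in \J^t(\Q)[p]\simeq \Jac(C)(\Q)[p]$, so by the universal property of $\mathcal{P}_{\J}$, the torsor $\varphi^t\colon\mathcal{B}^t\to\J$ represents, along $\J\times\{kQ\}$, the line bundle $\mathcal{P}_{\J}|_{\J\times kQ}$. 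Unwinding Waterhouse's construction then shows that the $k$-th component of $\psi_{L,S}(P)=\pi\circ\delta(P)$ is the class of $(P\times kQ)^*\mathcal{P}_{\J}$. Taking $k=1$ yields a non-trivial component, hence $\psi_{L,S}(P)\neq 0$.

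The main obstacle is the identification in the second paragraph between the output of Waterhouse's morphism $\pi$ and the Poincar{\'e}-bundle pullbacks; this is a formal but nontrivial piece of dictionary, implicit in \cite{GillibertLevin} and going back to earlier work on the class-invariant homomorphism. Once it is in place, the conclusion is immediate from Corollary~\ref{jacobian_cor} together with the fact that $Q$ is a non-zero $p$-torsion point, and the infinitude of $L$ is inherited directly from that corollary.
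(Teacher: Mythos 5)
Your proposal is correct and takes essentially the same route as the paper: apply Corollary~\ref{jacobian_cor} with $m=p$ and identify the components of $\psi_{L,S}(P)=\pi(\delta(P))$ with the pullbacks $(P\times kQ)^*\mathcal{P}_{\J}$, which is exactly the paper's combination of the formula $\pi(t)=(0,c(t),c(t)^2,\dots,c(t)^{p-1})$ (cited from \cite[Prop.~3.1]{gil3}) with the identity $c(\delta(P))=(P\times Q)^*\mathcal{P}_{\J}$ (cited from \cite[Lemme~3.2]{gil1}). The only difference is that the paper dispatches the ``dictionary'' you flag as the main obstacle by citing those two references rather than sketching it, so your unwinding of Waterhouse's construction is precisely the content of the cited results.
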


\begin{proof}
It is well known (see for example \cite[Prop.~3.1]{gil3}) that, given $t\in H^1(\mathcal{O}_{L,S},\mu_p)$,
$$
\pi(t)=(0,c(t),c(t)^2,\dots c(t)^{p-1})
$$
where $c$ is the natural map $H^1(\mathcal{O}_{L,S},\mu_p)\to \Cl(\mathcal{O}_{L,S})$ given by Kummer theory. Moreover, it follows from the arguments of \cite[Lemme~3.2]{gil1} that
$$
c(\delta(P))=(P\times Q)^*\mathcal{P}_{\J}
$$
hence the result, by virtue of Corollary~\ref{jacobian_cor}.
\end{proof}

\begin{rmq}
When enlarging $S$, one enlarges the kernel of $\psi_{L,S}$. Therefore, when proving triviality results, one should avoid to invert places, and conversely one may invert places while proving non triviality results.
\end{rmq}

\subsection{Proof of Theorem~\ref{theorem14}}
\label{theorem14_proof}

We come back to the situation considered in Subsection~\ref{subsection_results}. In particular, $C$ now denotes the smooth projective curve over $\Q$ birational to the curve with equation $y^p=x(1-x)$, which, according to Proposition~\ref{prop_jac_red}, is hyperelliptic with a rational Weierstrass point, and $A$ denotes the Jacobian of $C$.

Using the Jacobian criterion, one checks that the set of primes of bad reduction of $C$ is $S=\{p\}$. If we let $P_0=(0,0)$ and $P_{\infty}$ be the point at infinity, then the divisor of the $x$-function on $C$ is $pP_0-pP_{\infty}$, hence the class of the divisor $P_0-P_{\infty}$ is a point of order $p$ in $A(\Q)$, that we denote by $Q$. Moreover, $A$ has complex multiplication by $\Z[\zeta]$, and $Q$ is invariant under the action of $\zeta$. Hence, over the field $K$, the subgroup generated by $Q$ coincides with $A[\lambda]$, where $\lambda:=1-\zeta$.

Over $\Q$, we have an isogeny $\varphi:A\to B$ with cyclic kernel of order $p$, generated by $Q$. If we base change this isogeny to $K$, we recover the isogeny $\lambda:A\to A$ (because the two isogenies have the same kernel). This implies that $B$ is isomorphic to $A$ over $K$. However, the endomorphism $\lambda$ being not defined over $\Q$, the variety $B$ is not isomorphic to $A$ over $\Q$.

Let $L$ be a number field containing $K$. On the one hand, we may consider the homomorphism $\psi_{L,S}$ defined in Subsection~\ref{subsection31}, which is obtained by dividing points by the isogeny $\varphi^t$. On the other hand, the homomorphism $\Psi_{L,S}$ from the introduction is obtained by dividing points by the isogeny $\lambda$. Our aim is to prove that these two maps $\psi_{L,S}$ and $\Psi_{L,S}$ coincide, up to some automorphism of $A$. As we said above, the equality $\lambda=\varphi$ holds over $L$. Therefore, it suffices to prove that the isogeny $\lambda$ is, up to some automorphism of $A$, its own dual.

Indeed, let $\lambda^t$ be the dual of $\lambda$, then $\lambda^t$ is an endomorphism of $A$, hence corresponds to an element of $\Z[\zeta]$. Let $\mathfrak{p}$ be the unique prime of $K$ above $p$, then it is well known that $\mathfrak{p}^{p-1}=(p)$ and that $\lambda$ is a generator of $\mathfrak{p}$. It follows that $\lambda^{p-1}$ is (up to some automorphism of $A$) the multiplication by $p$ on $A$, that we denote by $[p]$. Hence, $(\lambda^{p-1})^t=(\lambda^t)^{p-1}$ is the dual of $[p]$, than one identifies with $[p]$ itself by auto-duality of $A$. Therefore, the ideal generated by $(\lambda^t)^{p-1}$ is equal to $(p)$, which implies that $\lambda^t=\lambda$ up to a unit in $\Z[\zeta]$. Hence the result.

Finally, Theorem~\ref{theorem14} follows from Corollary~\ref{class_invariant_cor} and the following remark: when performing the base change $K/\Q$, which has degree coprime to $p$, we preserve the $p$-torsion of the class group of $\mathcal{O}_{L,S}$.

\begin{rmq}
The identification between $\lambda:A\to A$ and its dual can be extended at the level of N\'eron models. This has the following consequence: the group schemes $\mathcal{G}$ and $\mathcal{G}^D$ are isomorphic (over $\mathcal{O}_F$), in other terms $\mathcal{G}$ is self-dual. In particular, the special fiber of $\mathcal{G}$ at the unique prime of $F$ above $p$ has to be $\alpha_p$, the unique self-dual group scheme of order $p$ over $\mathbb{F}_p$.
\end{rmq}


\vskip 20pt

Institut de Math\'ematiques de Bordeaux

CNRS UMR 5251

351, cours de la Lib\'eration

F-33400 Talence, France.
\medskip

\texttt{Philippe.Cassou-Nogues@math.u-bordeaux1.fr}

\texttt{Jean.Gillibert@math.u-bordeaux1.fr}

\texttt{Arnaud.Jehanne@math.u-bordeaux1.fr}

\end{document}